\newcommand\la{\langle}
\newcommand\ra{\rangle}
\newcommand\hh{{\mathfrak h}}
\newcommand\nn{{\mathfrak n}}
\newcommand\sso{{\mathfrak{so}}}
\newcommand\vv{{\mathfrak v}}
\newcommand\ww{{\mathfrak w}}
\newcommand\zz{{\mathfrak z}}
\newcommand\CC{\mathbb C}
\newcommand\HH{\mathbb H}
\newcommand\OO{\mathbb O}
\newcommand\RR{\mathbb R}
\newcommand\so{{\mathrm{so}}}
\newcommand\ad{\operatorname{ad}}
\newcommand\id{\operatorname{Id}}
\renewcommand\Re{\operatorname{Re}}
\renewcommand\Im{\operatorname{Im}}
\theoremstyle{plain}
\newtheorem{thm}{Theorem}[section]
\newtheorem{lem}[thm]{Lemma}
\newtheorem{prop}[thm]{Proposition}
\newtheorem{cor}[thm]{Corollary}
\theoremstyle{definition}
\newtheorem{defn}[thm]{Definition}
\newtheorem{rem}[thm]{Remark}
\newtheorem{example}[thm]{Example}
\begin{document}

\title[Magnetic fields on non-singular 2-step nilpotent Lie groups]
{Magnetic fields on non-singular 2-step nilpotent Lie groups}

\author{Gabriela P. Ovando, Mauro Subils}

\thanks{{\it (2000) Mathematics Subject Classification}: 70G45, 22E25, 53C99, 70G65 }

\thanks{{\it Key words and phrases}:  2-step nilmanifolds, closed 2-forms, $H$-type Lie groups.
 }

\thanks{Partially supported by  SCyT (UNR)}

\address{ Departamento de Matem\'atica, ECEN - FCEIA, Universidad Nacional de Rosario.   Pellegrini 250, 2000 Rosario, Santa Fe, Argentina.}

\

\email{gabriela@fceia.unr.edu.ar}

\email{subils@fceia.unr.edu.ar}


\begin{abstract}  The aim of this work is the study of left-invariant   magnetic fields on 2-step nilpotent Lie groups.   While the existence of closed 2-forms for which the center is either nondegenerate or in the kernel of the 2-form, is always guaranteed, the existence of closed 2-forms for which the center is isotropic but not in the kernel of the 2-form, is a special situation. These 2-forms are called of type II. We obtain a strong obstruction for the existence on non-singular Lie algebras. Moreover, we prove that the only $H$-type Lie groups admitting closed 2-forms of type II are the real, complex and quaternionic Heisenberg Lie groups of dimension three, six and seven, respectively.  We also  prove the non-existence of uniform magnetic fields under certain hypotheses. Finally we give a construction of non-singular Lie algebras, proving that in some families of these examples there are no closed 2-form of type II.
\end{abstract}

\maketitle

 \noindent\section{Introduction}

 Recently, in \cite{AF} it was proved that nilpotent fat sub-symmetric spaces  are always 2-step nilpotent groups. 
 Fat bundles were introduce by Weinstein in \cite{We} in order to study distributions which are in some sense ``completely non flat''.  On this bundles any vector field on the distribution generates all the tangent bundle by taking brackets with other horizontal vector fields, which is equivalent to say that the symbol of this distribution is a non-singular 2-step nilpotent Lie algebra as introduced in \cite{Eb}. So they are also called fat Lie algebras \cite{M} \cite{LO}.
 
 The study  of 2-forms on 2-step nilpotent Lie groups was treated by many authors in different frameworks, obtaining important results in the following topics: Killing Yano 2-forms \cite{BDS},  Killing forms \cite{dBM},  symplectic structures (see for instance \cite{dB1,dB2,DT,Mi}). In the present paper we focus on closed left-invariant 2-forms  on a 2-step nilpotent Lie group $N$. 
 These 2-forms are called {\em magnetic fields}, since they are  deeply related to  magnetic trajectories (see for instance \cite{Su,OS} and references herein). In fact, magnetic trajectories are curves on a Riemannian manifold $(M, \la \,, \, \ra)$,  solutions of the equation \begin{equation}\label{magneteq}
 \nabla_{\gamma'} {\gamma'} = F{\gamma'},
 \end{equation}
 where $\nabla$ is the Levi-Civita connection on $M$ and $F$ is a (1,1)-tensor giving rise to a closed 2-form $\omega=\la F\cdot, \cdot \ra$, which could be degenerate. It is important to notice that different magnetic fields produce magnetic trajectories with strong differences with respect to their behaviour, specially when considering magnetic fields of type II. 
 
  Whenever  $\Gamma$ is a discrete co-compact subgroup of the Lie group $N$, the quotient space $M=\Gamma \backslash N$ is a compact nilmanifold. By Nomizu's Theorem   every de Rahm cohomology group $H^i(M, \RR)$ is isomorphic to $H^i(\nn)$. In particular one looks for closed 2-forms on the  on the Lie algebra of $N$, namely $\nn$. 

 	Dotti and Tirao in \cite{DT} proved that if the 2-step nilpotent Lie group $N$ admits a left-invariant symplectic structure, then 
 $$2\dim[\nn, \nn] \leq \dim \nn +  1.$$
 By studying the existence of closed 2-forms we proved that 
 if $\nn$ is non-singular and
  $$\dim \nn > 3 \dim [\nn, \nn],$$
  then any closed 2-form $\omega$ on $\nn$ satisfies
  $$\omega(Z,U)=0, \qquad \mbox{ for all } Z\in \zz, U\in \nn.$$
 
 This is equivalent to say that for any metric, the corresponding skew-symmetric map induced by $\omega$ preserves the decomposition 
 $$(*) \qquad \nn =\vv \oplus \zz, \qquad \mbox{ for } \vv=\zz^{\perp}, $$
 where $\zz$ denotes the center of $\nn$. Notice that in the non-singular case, the commutator $[\nn, \nn]$ coincides with the center of the Lie algebra $\zz$. 
 
 We proved the following theorem:
 
 \smallskip
 
 {\it Theorem  (see Theorem \ref{thm2} )} Let $N$ be an $H$-type Lie group. It admits a left-invariant Lorentz force, such that $F\zz \subseteq \vv$ and $F\vv \subseteq \zz$ if and only if the Lie algebra $\nn$ is (isomorphic to) the Heisenberg Lie algebra of dimension three, the complex Heisenberg Lie algebra of dimension six or the quaternionic Heisenberg Lie algebra of dimension seven.
 
 \smallskip
  
  These are exactly the Lie algebras such that the corresponding nilmanifolds $T^t\times \Gamma \backslash N$ admit a symplectic structure, where $t=0,1$, \cite{DT}. The proof given there makes use of cohomological methods.
  In other words, these results prove the following equivalences for  an $H$-type Lie group $N$:
  \begin{itemize}
  	\item  $T^t \times N$  admits a (left-invariant) symplectic structure, $t=0,1$.
  	\item $N$ admits a closed left-invariant 2-form such that  $\omega(\zz, \zz)=0$ but $\omega(\zz, \nn)\neq 0$.
  	\item the Lie algebra of $N$ is isomorphic to the Heisenberg Lie algebra of dimension three $\hh_3$, the complex Heisenberg Lie algebra of dimension six $\hh(\CC)$ or the quaternionic Heisenberg Lie algebra of dimension seven $\hh(\HH)$. 
  \end{itemize}

Indeed, along the text we show examples of singular and almost non-singular Lie algebras admitting closed 2-forms of type II. To obtain these results we consider the decomposition above in (*) and decompose a Lorentz map $F:\nn \to \nn$ as $F=F_1+F_2$, where $F_1$ preserves the decomposition, while $F_2$ verifies $F_2(\vv)\subset \zz$ and $F_2(\zz)\subset \vv$. In this way the closeness condition can be written in terms of conditions for each one of these maps $F_i$, that we deepen. By considering the non-singular property, one gets the first lemma and after that, a deeply study on $H$-type Lie algebras, gives the proof of the theorem, using features of the corresponding division algebras. 

A later question was considered: are there other non-singular Lie algebras than the $H$-type ones, admitting closed 2-forms of type II? At this point one can realized that the most known examples of non-singular Lie algebras are the $H$-type ones and there are few methods to construct others. For center of dimension $2$ they are classified in \cite{LT}, where they also define the concept of $\tilde{H}$-type Lie algebras. Other interesting examples with center of dimension $3$ are given in \cite{LO}. But all these examples of non $H$-type Lie algebras appear on dimensions of non irreducible $H$-type Lie algebras. Inspired by \cite{LT}, we propose another way to construct non-singular Lie algebras, by modifying one of the Lie bracket relations on an $H$-type Lie algebra. In this way we get non-singular Lie algebras that are not of type $H$ for any admissible dimension greater than $7$.  

In some families of these new examples of non-singular Lie algebras, we proved that there are no closed 2-forms of type II. Thus, the question of existence of closed 2-forms of type II in non-singular Lie algebras is still open.  


\section{Lie groups of step two with a left-invariant metric}\label{general}

A Lie group is called 2-step nilpotent if its Lie algebra is 2-step nilpotent, that is, the Lie bracket satisfies $[[U,V], W]=0$ for all $U,V,W\in \nn$. Throughout this paper Lie groups, so as their Lie algebras are considered over $\RR$. 



\begin{example} \label{exa1} The smallest dimensional 2-step nilpotent Lie group is the Heisenberg Lie group $H_3$. It has dimension three and its Lie algebra is spanned by vectors $e_1, e_2, e_3$ satisfying the non-trivial Lie bracket relation
	$$[e_1,e_2]=e_3.$$
	The Lie group $H_3$ can be modelled on $\mathbb R^3$ equipped with the product operation given by
	$$(v_1,z_1)(v_2,z_2)=(v_1+v_2, z_1+z_2+\frac{1}2 v_1^tJv_2),$$
	where $v_i=(x_i,y_i)$, i=1,2 and $J:\RR^2 \to \RR^2$ is the linear map $J(x,y)=(y, -x)$. By using this, usual computations show that a basis of left-invariant vector fields is given at $p=(x,y,z)$ by
	$$e_1(p)=\partial_x -\frac12 y \partial_z, \quad e_2(p)=\partial_y+\frac12 x \partial_z, \quad e_3(p)=\partial_z.$$
Another presentation of the Heisenberg Lie group is given by $3\times3$-triangular real  matrices with 1's on the diagonal with the usual multiplication of matrices.  
	\end{example} 

A Riemannian  metric $\la\,,\,\ra$ on the Lie group  $N$ is called {\it left-invariant} if  translations on the left by elements of the group are isometries. Thus, a left-invariant metric  is determined at the Lie algebra level $\nn$, usually identified with the tangent space at the identity element $T_eN$. The metric on the Lie algebra, also denoted $\la\,,\,\ra$, determines an orthogonal decomposition as vector spaces: 
\begin{equation}\label{decomp2}
	\nn=\vv \oplus \zz, \quad \mbox{ where }\quad \vv =\zz^{\perp}
\end{equation}
and $\zz$ denotes the center of $\nn$. The subspaces $\vv$ and $\zz$ induce left-invariant distributions on $N$, denoted by $\mathcal V$ and $\mathcal Z$.  

The decomposition in Equation \eqref{decomp2} induces the skew-symmetric maps $j_Z:\vv \to \vv$, for every $Z\in \zz$,  implicitly defined by 
\begin{equation}\label{j}
	\la Z, [V,W]\ra =\la j_Z V, W \ra \qquad \mbox{ for all } Z\in \zz, V, W\in \vv. 
\end{equation}

Note that $j:\zz \to \mathfrak{so}(\vv)$ is a linear map. Let $C(\nn)$ denote the commutator of the Lie algebra $\nn$. One has the splitting 
$$\zz=C(\nn)\oplus \ker(j)$$ as orthogonal direct sum of vector spaces. In fact, 
\begin{itemize}
	\item since $\la Z, [U,V]\ra=0$ for all $U,V\in \vv$ and $Z\in \ker(j)$,  then $\ker(j)\perp C(\nn)$.
	\item $\dim \zz = \dim \ker(j)+ \dim C(\nn)$. 
	\item The restriction $j:C(\nn) \quad \mapsto \quad \mathfrak{so}(\vv)\quad \mbox{is injective}.$
\end{itemize} 
In fact, assume $j_Z=j_{\bar{Z}}$ for $Z, \bar{Z}\in C(\nn)$. Then $j_{Z-\bar{Z}}=0$, so that $Z-\bar{Z}\in \ker(j)\cap C(\nn)$. Thus $Z-\bar{Z}=0$. 

See the proof of the next result in Proposition 2.7 of \cite{Eb}. 
 
 \begin{prop} \cite{Eb} Let $(N, \la,,\,\ra)$ denote a 2-step nilpotent Lie group with a left-invariant metric. Then 
 	\begin{itemize}
 		\item the subspaces $\ker j$ and $C(\nn)$ are commuting ideals in $\nn$. 
 		\item Let $E=\exp(\ker(j))$. Then $E$ is the Euclidean de Rham factor of $N$ and $N$ is isometric to the Riemannian product of the totally geodesic submanifolds $E$ and $\bar{N}$ where $\bar{N}=\exp(\vv \oplus C(\nn))$. 
 	\end{itemize}
 	
 	\end{prop}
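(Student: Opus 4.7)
The plan is to split the proposition into three parts: (i) the algebraic statement that $\ker(j)$ and $C(\nn)$ are commuting ideals, (ii) the Lie-theoretic product decomposition $N\cong E\times\bar N$, and (iii) the Riemannian/curvature statement identifying $E$ as the Euclidean de~Rham factor.

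For part (i), both claims are essentially immediate once one observes that both subspaces lie in the center~$\zz$. Indeed, $\ker(j)\subseteq\zz$ by definition, while $C(\nn)=[\nn,\nn]\subseteq\zz$ since $\nn$ is 2-step nilpotent. Any subspace of $\zz$ is automatically an ideal and commutes with every other subspace of $\nn$, so in particular $\ker(j)$ and $C(\nn)$ are commuting ideals. The preceding discussion in the text already shows $\ker(j)\cap C(\nn)=0$ using the injectivity of $j|_{C(\nn)}$, so $\zz=\ker(j)\oplus C(\nn)$ is an orthogonal direct sum.

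For part (ii), I would observe that the orthogonal decomposition $\nn=\ker(j)\oplus(\vv\oplus C(\nn))$ is actually a decomposition of $\nn$ as a direct sum of Lie algebras: $\vv\oplus C(\nn)$ is a subalgebra (since $[\vv,\vv]\subseteq C(\nn)$ and $C(\nn)$ is central), and it commutes with $\ker(j)$ because the latter is central. Exponentiating yields $N\cong E\times \bar N$ as Lie groups, where $E=\exp(\ker(j))$ is abelian.

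For part (iii), I would invoke the standard Koszul-formula computation of the Levi-Civita connection on a 2-step nilpotent Lie group with left-invariant metric, namely, for left-invariant fields,
\begin{equation*}
\nabla_V W=\tfrac{1}{2}[V,W],\quad \nabla_V Z=\nabla_Z V=-\tfrac{1}{2}j_Z V,\quad \nabla_Z W=0
\end{equation*}
for $V,W\in\vv$ and $Z,W\in\zz$. If $Z\in\ker(j)$, all three formulas give $\nabla_XZ=0$ for every left-invariant field $X$, so $Z$ is a parallel vector field on $N$; by left-invariance of the connection this extends to all vector fields. Thus the distribution $\mathcal E$ induced by $\ker(j)$ is parallel, and so is its orthogonal complement $\mathcal E^{\perp}$ induced by $\vv\oplus C(\nn)$. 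Both distributions are involutive (their leaves at the identity are $E$ and $\bar N$), so the de~Rham decomposition theorem yields an isometry $N\cong E\times \bar N$ with $E$ and $\bar N$ totally geodesic. Finally, $E$ carries commuting parallel vector fields spanning its tangent bundle, which forces the curvature tensor to vanish on $E$; hence $E$ is the Euclidean de~Rham factor.

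The only real technical point is the derivation of the connection formulas from the Koszul identity, which is a routine calculation using the bi-invariance of the bracket relations and the left-invariance of the metric. Everything else reduces to the observation that $\ker(j)$ consists of parallel central directions, which makes the geometric splitting essentially a formality once the algebraic splitting is in hand.
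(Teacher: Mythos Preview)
The paper does not give its own proof of this proposition: immediately before the statement it writes ``See the proof of the next result in Proposition 2.7 of \cite{Eb}.'' So there is nothing in the present paper to compare against; your outline is essentially the standard argument, and it is correct.

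Two small points are worth tightening. First, the passage from the ideal decomposition $\nn=\ker(j)\oplus(\vv\oplus C(\nn))$ to a Lie-group product $N\cong E\times\bar N$, and the subsequent appeal to the de~Rham decomposition theorem, both use that $N$ is simply connected; this is the standing convention in this context but should be said. Second, you show $E$ is flat but not yet that it is the \emph{entire} Euclidean factor; one also needs that $\bar N$ has no nonzero parallel left-invariant vector field. This follows from your connection formulas: if $V+Z$ with $V\in\vv$, $Z\in C(\nn)$ satisfies $\nabla_W(V+Z)=\tfrac12[W,V]-\tfrac12 j_ZW=0$ for all $W\in\vv$, then separating the $\zz$- and $\vv$-components gives $[W,V]=0$ and $j_ZW=0$ for all $W$, hence $V\in\vv\cap\zz=0$ and $Z\in\ker(j)\cap C(\nn)=0$.
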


 \begin{example}
 	Let $\hh_3$ denote the Heisenberg Lie algebra of dimension three with basis $e_1, e_2, e_3$ as in Example \ref{exa1}. Take the metric so that  this basis is orthonormal. It is not hard to see that the center is the subspace
 	 $\zz=span\{e_3\}$, while its orthogonal complement is  the subspace $\vv=span\{e_1, e_2\}$ and moreover the map $j:\zz\to \so(\vv)$ is generated by 
 	$$j_{e_3}=\left( \begin{matrix}
 		0 & -1\\
 		1 & 0 
 	\end{matrix}\right), 
 $$
 in the basis $e_1, e_2$ of $\vv$. 
 \end{example}

\begin{defn}
A  2 -step nilpotent real Lie algebra $\nn$ with center $\zz$ is called {\em non-singular}  if  $\ad(X): \mathfrak{n} \rightarrow \mathfrak{z}$ is onto for any $X \notin \mathfrak{z}$ \cite{Eb}. The corresponding 2-step nilpotent Lie group will be called non-singular. 

\end{defn}

See the next examples of non-singular Lie algebras. 

\begin{example} {\it Heisenberg Lie algebras.} \label{ExHeis} Let $n\geq 1$ be any integer and let $X_1,Y_1, X_2, Y_2,$ $ \hdots, X_n, Y_n$ be any basis of a real vector space $\vv$ isomorphic to $\RR^{2n}$. Let $Z$ be an element generating a one dimensional space $\zz$. Define a Lie bracket by $[X_i, Y_i]=-[Y_i, X_i]=Z$ and the other Lie brackets by zero. The Lie algebra $\hh_{2n+1}=\vv \oplus \zz$ is the $(2n+1)$-dimensional Heisenberg Lie algebra.   	
\end{example}

\begin{example} {\it Quaternionic  Heisenberg  Lie algebras.} Let $n\geq 1$ be any integer. For each integer $1\leq i \leq n$,  let  $\mathbb H_i$ be a  four dimensional real vector spaces with basis $X_i,Y_i, V_i, W_i$. Let $\zz$ denote a three dimensional real vector space with basis $Z_1, Z_2, Z_3$. Consider the vector space direct sum $\nn=\vv \oplus \zz$, where $\vv=\bigoplus_i \mathbb H_i$. Define a Lie bracket on the Lie algebra $\nn$, $[\,,\,]$, that is $\RR$-bilinear and skew-symmetric with non-trivial relations as follows:
	
	\smallskip
	
	$[Z, \xi]=0$ for all $Z\in \zz, \xi\in \nn$, 
	$$	
	\begin{array}{llrl} 
		[X_i,Y_i]=Z_1, & [X_i,V_i]=Z_2, & [X_i, W_i]=Z_3 & \mbox{ for } 	1\leq i \leq n, \\
		{ [V_i, W_i]=Z_1}, & [Y_i, W_i]=-Z_2, & [Y_i, V_i]=Z_3 & \mbox{ for } 	1\leq i \leq n. 	
	\end{array}
	$$
	The resulting Lie algebra is called the quaternionic Heisenberg Lie algebra of dimension $4n+3$. 
\end{example}

Once the 2-step nilpotent Lie algebra $\nn$ is equipped  with a metric, one has the corresponding maps $j_Z:\vv \to \vv$ defined in Equation \eqref{j}. The non-singularity property is equivalent to the condition that  any map $j_Z:\vv\to\vv$ is non-singular for every $Z\in\zz$. And this condition is independent of the metric.

 Non-singular Lie algebras are also known as {\em fat} algebras because they are the symbol of distributions associated to fat bundles (see \cite{M} and \cite{We}).     

More generally, a 2-step nilpotent Lie algebra $\nn$ is said
\begin{itemize}
	\item {\em almost non-singular} if there are elements $Z, \widetilde{Z}\in \zz$ such that $j_Z$ is non-singular but $j_{\widetilde{Z}}$ is singular, 
	\item  {\em singular} if any map $j_Z$ is singular for every $Z\in \zz$.
\end{itemize} 

A family of non-singular Lie algebras is provided by $H$-type Lie algebras, which are defined as follows. 

Let $(\nn, \la\,,\,\ra)$ denote a 2-step nilpotent Lie algebra equipped with a metric. If the map $j_Z:\vv\to\vv$ is orthogonal for every $Z\in\zz$ with $\la Z, Z \ra =1$,  then the Lie algebra $\nn$ is a {\em Lie algebra  of type H  } \cite{K} (also known as $H$-type Lie algebras). Equivalently, the 2-step nilpotent Lie algebra $\nn$ is of type H if and only if   $$j_Z^2=-\la Z, Z\ra Id, \qquad \mbox{ for every }Z\in\zz, $$
which is equivalent to 
$j_Z j_{\widetilde{Z}}+ j_{\widetilde{Z}}j_Z =-2\la Z, \widetilde{Z}\ra Id$, for $Z, \widetilde{Z}\in \zz$. 
By making use of this identity one can prove that 
$$[X,j_ZX]=\la X, X\ra Z$$
for every $X\in\vv$ and $Z\in\zz$.
	The real, complex and quaternionic Heisenberg algebras are examples of $H$-type Lie algebras.

In a general $H$-type Lie algebra $\nn=\vv\oplus\zz$ with $\dim\,\zz=m$, the real vector space $\vv$ is a module over the Clifford algebra $\mathrm{Cl}(m)$ associated to the quadratic form $-\|\, . \,\|^2$. So the existence problem of Lie algebras of Type $H$ is naturally associated to the existence of representations of Clifford algebra $\mathrm{Cl}(m)$ (see \cite{K2}).

Let $n\geq 1$ be any integer  and write $n=(2a + 1)2^b$ with  $b =c + 4d$, where $a$, $b$, $c$ and $d$ are integers and $0\leq c\leq 3$. Define
$\rho(n)=2^c+8d$, called the {\em Radon-Hurwitz numbers}. They were introduced to calculate the number of linearly independent vector fields on the unit sphere on $\RR^{n}$: $\mathbb S^{n-1}=\{x\in \RR^{n} \,:\, \|x\|=1\}$. 

\begin{prop}\label{pairs}\cite{K,M}
	Let $n$,$m$ be non-negative integers. The following statements are equivalent. 
	\begin{enumerate}
		\item There exist a non-singular Lie algebra $\nn=\vv\oplus\zz$ with $\dim\,\vv=n$ and $dim\,\zz=m$.
		\item There exist an $H$-type Lie algebra $\nn=\vv\oplus\zz$ with $\dim\,\vv=n$ and $\dim\,\zz=m$.
		\item $\mathbb{S}^{n-1}$ admits $m$ linearly independent vector fields.
		\item There exist a $\mathrm{Cl}(m)$-module of dimension $n$.
		\item $m<\rho(n)$.
	\end{enumerate}
\end{prop}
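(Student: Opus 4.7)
The plan is to go around the five statements in a convenient order. The implication $(2)\Rightarrow(1)$ is immediate, since the defining identity $j_Z^2=-\la Z,Z\ra\id$ of an $H$-type Lie algebra makes $j_Z$ invertible for every nonzero $Z\in\zz$, which is exactly non-singularity.

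For $(1)\Rightarrow(3)$ I would fix any orthonormal basis $Z_1,\dots,Z_m$ of $\zz$ and consider, for $X\in\mathbb{S}^{n-1}\subset\vv$, the vectors $j_{Z_1}X,\dots,j_{Z_m}X$. Each $j_{Z_i}X$ is orthogonal to $X$ by skew-symmetry of $j_{Z_i}$. Moreover, non-singularity of $\nn$ gives surjectivity of $\ad(X)|_{\vv}\colon\vv\to\zz$, so the set $\{[X,Y]:Y\in\vv\}$ spans $\zz$; dually, this means that $Z\mapsto j_ZX$ is injective as a map $\zz\to\vv$. Hence the vectors $j_{Z_i}X$ are linearly independent, and sending $X$ to the tuple $(j_{Z_1}X,\dots,j_{Z_m}X)$ defines $m$ linearly independent tangent vector fields on $\mathbb{S}^{n-1}$.

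The equivalence $(3)\Leftrightarrow(4)$ is the Hurwitz--Radon--Eckmann theorem: an orthonormal $m$-frame field on $\mathbb{S}^{n-1}$ is precisely the same data as a representation of $\mathrm{Cl}(m)$ on $\RR^n$ by skew-symmetric operators. For $(4)\Rightarrow(2)$, a $\mathrm{Cl}(m)$-module $\vv$ of real dimension $n$ carries an invariant positive definite inner product (by averaging), with respect to which the generators $j_{Z_i}$ are skew-symmetric and satisfy $j_{Z_i}j_{Z_j}+j_{Z_j}j_{Z_i}=-2\delta_{ij}\id$; declaring $\{Z_1,\dots,Z_m\}$ to be orthonormal in $\zz$ and defining the bracket on $\nn=\vv\oplus\zz$ via \eqref{j} then produces an $H$-type Lie algebra of the prescribed dimensions. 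Finally $(3)\Leftrightarrow(5)$ is Adams's theorem on vector fields on spheres, which pins down the maximal number of pointwise linearly independent tangent vector fields on $\mathbb{S}^{n-1}$ as $\rho(n)-1$.

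The genuinely hard part is $(3)\Leftrightarrow(5)$: the lower bound (that $m\leq\rho(n)-1$ is always achievable) goes back to Hurwitz and Radon and is essentially algebraic, but the matching upper bound is Adams's deep theorem from algebraic topology. Since the statement is explicitly credited to \cite{K,M}, I would invoke this last step as a black box rather than reproduce its proof here.
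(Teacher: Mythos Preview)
The paper does not actually prove this proposition; it is stated with a citation to \cite{K,M} and no argument is given. So there is no ``paper's proof'' to compare against, and your outline is essentially the standard one found in the references.

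That said, there is a genuine slip in your chain of implications. You assert that $(3)\Leftrightarrow(4)$ is the Hurwitz--Radon--Eckmann theorem and that ``an orthonormal $m$-frame field on $\mathbb{S}^{n-1}$ is precisely the same data as a representation of $\mathrm{Cl}(m)$ on $\RR^n$ by skew-symmetric operators.'' This is not correct. A $\mathrm{Cl}(m)$-module structure does produce such a frame field (this is the Hurwitz--Radon--Eckmann construction, and it gives $(4)\Rightarrow(3)$), but an arbitrary continuous $m$-frame field on the sphere need not come from linear maps at all, so there is no direct passage $(3)\Rightarrow(4)$. Indeed, if $(3)\Leftrightarrow(4)$ were elementary and $(4)\Leftrightarrow(5)$ is classical Clifford-module dimension counting, then $(3)\Leftrightarrow(5)$ would also be elementary, contradicting your own correct remark that this step is Adams's deep theorem.

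The fix is to reroute the cycle: keep $(2)\Rightarrow(1)\Rightarrow(3)$ as you have them, invoke Adams for $(3)\Rightarrow(5)$, use the classification of $\mathrm{Cl}(m)$-modules for $(5)\Rightarrow(4)$ (the irreducible module dimensions show that $m<\rho(n)$ forces $n$ to be a multiple of the irreducible dimension), and then do $(4)\Rightarrow(2)$ as you wrote. The implication $(4)\Rightarrow(3)$ via Hurwitz--Radon is then redundant for the logical cycle but gives the easy half of $(3)\Leftrightarrow(5)$. Your remark about obtaining an invariant inner product ``by averaging'' in $(4)\Rightarrow(2)$ should be made precise: one averages over the finite multiplicative group generated by the Clifford generators, not over the Clifford algebra itself.
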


Moreover the next two theorems give a complete classification of $H$-type Lie algebras, see for instance \cite{CDKR,K2,LT}.

\begin{thm}
	Every module over the Clifford algebra $\mathrm{Cl}(m)$ is the 
	direct sum of irreducible modules. Up to isomorphism, there is precisely one irreducible module $\vv_{m}$ over $\mathrm{Cl}(m)$ for $m \not\equiv 3\,(mod\, 4)$, and there are precisely two $\vv_{m}^{+}$ and $\vv_{m}^{-}$ 
	for $m \equiv 3\, (mod\, 4)$. 
\end{thm}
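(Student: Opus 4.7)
The plan is to apply the Artin--Wedderburn classification to the Clifford algebra $\mathrm{Cl}(m)$, a finite-dimensional associative $\RR$-algebra of dimension $2^m$. First I would establish semisimplicity, from which the decomposition of every module into irreducible summands follows automatically. One clean way is to note that the finite subgroup $G\subset \mathrm{Cl}(m)^{\times}$ generated by $\pm e_1,\dots,\pm e_m$ has order $2^{m+1}$ and $\RR$-spans $\mathrm{Cl}(m)$; a standard averaging argument then equips any finite-dimensional module with a $G$-invariant (hence $\mathrm{Cl}(m)$-compatible) inner product, and orthogonal complements split every submodule.

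Next I would count simple summands by studying the center, using the volume element $\omega=e_1 e_2\cdots e_m$. A direct computation gives $\omega e_i=(-1)^{m-1}e_i\omega$ and $\omega^2=(-1)^{m(m+1)/2}$, so $\omega$ is central precisely when $m$ is odd, with $\omega^2=+1$ iff $m\equiv 3\pmod 4$ and $\omega^2=-1$ iff $m\equiv 1\pmod 4$. In the case $m\equiv 3\pmod 4$, the elements $e_{\pm}=\tfrac12(1\pm\omega)$ are orthogonal central idempotents summing to $1$, and they split $\mathrm{Cl}(m)=\mathrm{Cl}(m)e_+\oplus\mathrm{Cl}(m)e_-$ into two two-sided ideals, each with real center; assuming each block is simple, Wedderburn forces exactly two non-isomorphic irreducibles $\vv_m^{+}$ and $\vv_m^{-}$, distinguished by $\omega$ acting as $+1$ or $-1$. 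In the remaining residues I would show that the center of $\mathrm{Cl}(m)$ is a field over $\RR$, namely $\RR$ itself when $m$ is even and $\RR[\omega]\cong\CC$ when $m\equiv 1\pmod 4$; since a semisimple algebra whose center is a field is simple, this yields a unique irreducible $\vv_m$.

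The delicate step is the simplicity assertion, together with the claim that no further central elements exist beyond those already exhibited; I would handle this by induction using Bott periodicity $\mathrm{Cl}(m+8)\cong\mathrm{Cl}(m)\otimes_{\RR}M_{16}(\RR)$, which preserves both simplicity and the number of simple blocks, together with the base cases $\mathrm{Cl}(0)=\RR$, $\mathrm{Cl}(1)=\CC$, $\mathrm{Cl}(2)=\HH$, $\mathrm{Cl}(3)=\HH\oplus\HH$, $\mathrm{Cl}(4)=M_2(\HH)$, $\mathrm{Cl}(5)=M_4(\CC)$, $\mathrm{Cl}(6)=M_8(\RR)$, $\mathrm{Cl}(7)=M_8(\RR)\oplus M_8(\RR)$, each of which can be identified directly from the defining relations. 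This is the main obstacle; once it is in hand, the Bott-periodic table displays $\mathrm{Cl}(m)$ as either a matrix algebra over $\RR$, $\CC$, or $\HH$, or as a direct sum of two congruent such blocks, so the count of irreducibles can be read off by inspection, and the theorem follows from the semisimplicity established in the first paragraph.
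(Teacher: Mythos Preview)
The paper does not actually prove this theorem; it is quoted without proof as a classical fact from the representation theory of Clifford algebras, with references to \cite{CDKR,K2,LT}. So there is no ``paper's own proof'' to compare against.

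Your outline is the standard textbook argument and is correct in substance. One small imprecision worth fixing: averaging over the finite group $G=\{\pm e_{i_1}\cdots e_{i_k}\}$ does not make the whole algebra $\mathrm{Cl}(m)$ act by isometries, so calling the resulting inner product ``$\mathrm{Cl}(m)$-compatible'' overstates things. What you actually need, and what you have, is that $G$ spans $\mathrm{Cl}(m)$ over $\RR$, so $G$-invariant subspaces coincide with $\mathrm{Cl}(m)$-submodules; hence the orthogonal complement of any submodule under a $G$-invariant inner product is again a submodule. With that rephrasing the semisimplicity step is complete. The remainder of your plan---computing the center via the volume element $\omega$, splitting into cases according to $m\bmod 4$, and verifying simplicity of the blocks by Bott periodicity $\mathrm{Cl}(m+8)\cong \mathrm{Cl}(m)\otimes_{\RR} M_{16}(\RR)$ together with the explicit identifications of $\mathrm{Cl}(0),\dots,\mathrm{Cl}(7)$---is exactly how the result is established in the sources the paper cites.
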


As the next theorem proves, in many cases the $H$-type Lie algebra is determined by the pair $(\dim \vv,\dim\zz)$.

\begin{thm}
	Up to isomorphism, any $H$-type Lie algebra $\nn$ with center $\zz$ of dimension $m$ is of the form:
	\begin{enumerate}
		\item $\nn=(\vv_{m})^k\oplus\zz$ for $m \not\equiv 3\,(mod\, 4)$,
		\item $\nn=(\vv_{m}^{+})^p\oplus(\vv_{m}^{-})^q\oplus\zz$ for $m \equiv 3\,(mod\, 4)$.
	\end{enumerate} 
	Where two pairs of exponents $p, q$ and $r, s$ in $(b)$ give isomorphic algebras if and only if 
	$\{p, q\} = \{r, s\}$.
\end{thm}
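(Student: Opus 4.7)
The plan is to deploy the correspondence between $H$-type Lie algebras with $\dim \zz = m$ and $\mathrm{Cl}(m)$-modules, turning the statement into a bookkeeping consequence of the Clifford classification stated immediately above. Given an $H$-type algebra $\nn = \vv \oplus \zz$ and an orthonormal basis $Z_1,\dots,Z_m$ of $\zz$, the identity $j_Z^2 = -\la Z,Z\ra\mathrm{Id}$ polarises to $j_{Z_i}j_{Z_j} + j_{Z_j}j_{Z_i} = -2\delta_{ij}\mathrm{Id}$, so the operators $j_{Z_i}$ realise $\vv$ as a $\mathrm{Cl}(m)$-module. Conversely, a $\mathrm{Cl}(m)$-module $\vv$ equipped with an invariant inner product (for which unit vectors in $\zz\subset \mathrm{Cl}(m)$ act as orthogonal transformations) reconstructs an $H$-type bracket via $\la [V,W], Z\ra = \la j_ZV, W\ra$, and the correspondence is additive with respect to direct sums. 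Applying the previous theorem then yields the asserted normal forms: $\nn \cong (\vv_m)^k \oplus \zz$ if $m \not\equiv 3\,(mod\, 4)$, and $\nn \cong (\vv_m^+)^p \oplus (\vv_m^-)^q \oplus \zz$ otherwise.

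It remains to settle the equivalence relation on the exponents in case (b). Sufficiency of $\{p,q\}=\{r,s\}$ follows from the observation that reversing the orientation of $\zz$, say by replacing $Z_1$ by $-Z_1$, yields an automorphism of the pair $(\vv,\zz)$ which sends $j_{Z_1}$ to $-j_{Z_1}$ and therefore interchanges the two irreducible types $\vv_m^+$ and $\vv_m^-$, producing a Lie algebra isomorphism $(\vv_m^+)^p\oplus(\vv_m^-)^q\oplus\zz \cong (\vv_m^+)^q\oplus(\vv_m^-)^p\oplus\zz$. For necessity, any Lie algebra isomorphism $\varphi:\nn\to\nn'$ maps the center isomorphically to the center; pulling back the inner product of $\nn'$ along $\varphi$ equips $\nn$ with a second $H$-type metric for which $\varphi$ becomes an isometry, so the underlying Clifford modules of $\nn$ and $\nn'$ are isomorphic up to an orthogonal change of basis of $\zz$. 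Only the determinant $\pm 1$ of that change survives at the level of isomorphism classes of irreducibles, and it swaps $\vv_m^+$ with $\vv_m^-$ precisely when $m\equiv 3\,(mod\, 4)$, which yields exactly the stated ambiguity.

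The main obstacle I anticipate is the uniqueness-of-$H$-type-metric step implicit in the previous paragraph: one must verify that any two $H$-type inner products on a fixed nilpotent Lie algebra $\nn$ produce $\mathrm{Cl}(m)$-module structures on $\vv$ differing only by an orthogonal change of basis of $\zz$, so that no further collapsing between the exponents $(p,q)$ arises via a clever choice of metric. This rigidity requires analysing the image in $\Or(m)$ of the action of $\Aut(\nn)$ on the center and checking that only the $\Or(m)/\SO(m)$ piece can act non-trivially on the isotypic decomposition in the $m\equiv 3\,(mod\, 4)$ case; the cited references \cite{CDKR,K2,LT} carry out this point in detail.
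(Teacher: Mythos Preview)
The paper does not prove this theorem at all: it is stated as a known classification result, with a pointer to the references \cite{CDKR,K2,LT} (see the sentence ``Moreover the next two theorems give a complete classification of $H$-type Lie algebras, see for instance \cite{CDKR,K2,LT}'' immediately preceding the two statements). So there is no ``paper's own proof'' to compare against.

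Your sketch is essentially the standard argument carried out in those references: translate $H$-type data into a $\mathrm{Cl}(m)$-module via the polarised identity, invoke complete reducibility and the enumeration of irreducibles (the theorem stated just before this one in the paper), and then sort out the residual ambiguity in case $m\equiv 3\,(\mathrm{mod}\,4)$ by analysing how $\Aut(\nn)$ acts on the center and hence on the two irreducible types. Your identification of the delicate point---that a Lie-algebra isomorphism need not be isometric, so one must argue that changing the $H$-type metric only alters the Clifford module structure by an element of $\Or(m)$---is accurate, and deferring that rigidity step to the cited literature is exactly what the paper itself does. Nothing in your outline is wrong; it simply reproduces the proof that the paper chose to omit.
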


Whenever  $k=1$ or $p+q=1$ we say that the corresponding $H$-type Lie algebra is {\it irreducible}. 

\subsection{$H$-type Lie algebras from normed real division algebras}\label{extypeH}

	Next we construct some important examples of $H$-type Lie algebras that arise from normed real division algebras. Firstly,  recall some basic notions about these division algebras. See \cite{KS} for further references.
	
	A normed real division algebra  $\mathbb{A}$ is a vector space with a bilinear product (non necessary associative) that have identity and in which every non-zero element has an inverse. Also they are endowed with a metric such that
	$$\|a.b\|=\|a\|\|b\|, \quad \mbox{ for all  } a,b\in \mathbb A.$$  
	Hurwitz's Theorem asserts that the only normed real division algebra are the real numbers: $\RR$,  the complex numbers: $\CC$,  the quaternions: $\HH$ or the octonions: $\OO$.

	The subspace given by $\Im(\mathbb{A})=\mathbb{R}^{\perp}$ is the subspace of purely imaginary vectors and every $V\in\mathbb{A}$ admits the decomposition $V=\Re(V)+\Im(V)$ where  $\Re(V)\in\mathbb{R}$ and $\Im(V)\in \Im(\mathbb{A})$, respectively the real and imaginary parts of the element $V\in \mathbb A$. So the conjugation mapping $V=\Re(V)+\Im(V)\to\overline{V}=\Re(V)-\Im(V)$ is the reflection across $\mathbb{R}$. Indeed, in the case $\mathbb A=\RR$, one has no imaginary subspace. 
	
	The metric on $\mathbb{A}$ can be described by means of the next relation:
	$$\la V,\, W\ra = \Re(V.\overline{W}).$$ 
The following useful identities hold:
	\begin{enumerate}
		\item $V.\overline{V}=\|V\|^2$  for every  $V\in\mathbb{A}$,
		\item $\la V,\, \Im(W)\ra = \la V,\, W\ra$ for all  $V\in\Im(\mathbb{A})$ and $W\in\mathbb{A}$,
		\item $\Re(V.W)=\Re(W.V)$  for any $V,\,W\in\mathbb{A}$,
		\item $\Re((V.W).U)=\Re(V.(W.U))$  for all $V,\,W,\,U\in\mathbb{A}$,
		\item $\la V.U,\, W\ra = \la V,\, W.\overline{U}\ra$ and  $\la U.V,\, W\ra = \la V,\, \overline{U}.W\ra$ for all  $V,\,W,\,U\in\mathbb{A}$,
	\end{enumerate}
	
	It is important to remark that both division algebras $\mathbb{H}$ and $\mathbb{O}$ are non-commutative and their center is $\mathbb{R}$. Moreover the algebra of octonions $\mathbb{O}$ is also non-associative, but it is alternative; which  means that the subalgebra generated by any two elements is associative. Furthermore, every non-zero element in $\mathbb{O}$ is contained in a subalgebra isomorphic to $\mathbb{H}$.

	We have two type of constructions of $H$-type Lie algebras associated to a normed real division algebra. 
	
	{\em Construction (i).} Assume that $\mathbb{A}=\mathbb{C}$, $\mathbb{H}$ or $\mathbb{O}$. 
	
	Take the direct sum vector space $\vv \oplus \zz$, where the subspaces $\vv$ and $\zz$ are respectively given by $\vv=\mathbb{A}$ and $\zz=\Im(\mathbb{A})$, each one equipped with its natural inner product from $\mathbb A$, such that $\zz$ is orthogonal to $\vv$.
	
	 Define the Lie bracket $[\cdot ,\cdot]:\vv\times\vv\to\zz$ as
	$$[U,V]=-\Im(U.\overline{V}) \quad \mbox{ and } \quad [Z, U]=0.$$
	Thus for any  $Z\in\zz$, and $U,V\in\vv$ one gets,
	$$\la j_Z U,V\ra = \la Z, [U,V]\ra = \la Z,-\Im(U.\overline{V})\ra=\la Z,\Im(V.\overline{U})\ra = \la Z,V.\overline{U}\ra = \la Z.U,V\ra.$$  
	This implies
	$$j_Z(V)=Z.V\quad \mbox{ for every } Z\in\zz, $$ 
	from which one obtains that 
	$$j_Z^2=-\|Z\|^2 \, \id, $$
	condition that derives from the fact that  $Z$ is a purely imaginary element in $\mathbb{A}$. Notice that $\langle Z, Z \rangle=\|Z\|^2$, for $Z\in\zz=\Im(\mathbb A)$. This is the family of $H$-type Lie algebras with pair $(n, m)$ where $m=\dim \zz = \dim \Im(A)$ and $n=\dim\vv=\dim A$. In this way, one gets the Heisenberg Lie algebra of dimension $3$ associated  with the pair $(2,1)$, the quaternionic  Heisenberg  Lie algebra of dimension $7$  with pair $(4,3)$ and the octonionic  Heisenberg  Lie algebra of dimension $15$ with pair $(8,7)$. 
	
	Observe that in this construction we have to be careful because we are identifying $\Im(\mathbb{A})$  with $\zz$ and with a subspace of $\vv$. For example, if $U=1\in\vv$ and $Z\in\zz$ then $J_{Z} U=1.Z=Z\in\vv$ (the same element of $\Im(\mathbb{A})$). Also, if $U,\, V\in\vv$ are purely imaginary and orthogonal then $[U,V]=U.V\in\zz$. 
	
	\smallskip
	
{\em Construction (ii).} Take the real vector spaces $\vv$ and $\zz$ as follows: $\vv=\mathbb{A}\times\mathbb{A}$ and $\zz=\mathbb{A}$. As above take the metric on $\vv$ and $\zz$ induced from that one on $\mathbb A$ (the usual product metric on $\vv$) and make $\zz$ orthogonal to $\vv$ in the direct sum $\vv\oplus \zz$.

 Define the Lie bracket on $\vv\oplus\zz$, in such way that 
\begin{itemize}
	\item $[Z, X]=0$ for all $Z\in \zz$, $X\in \vv\oplus \zz$, and 
	\item $[\, ,\, ]:\vv\times\vv\to\zz$ is given by
	$$[(U,V),(\widetilde{U},\widetilde{V})]=U.\widetilde{V}-\widetilde{U}.V, \quad\mbox{ for all } U,V,\widetilde{U}, \widetilde{V} \in \mathbb A.$$
\end{itemize}

Thus, from the properties  above, and for $Z\in\zz$, $U,V\in\vv$ one gets, 
	\begin{align*}
		\la j_Z(U,V),(\widetilde{U},\widetilde{V})\ra = \la Z, U.\widetilde{V}-\widetilde{U}.V\ra = \la Z, U.\widetilde{V}\ra - \la Z, \widetilde{U}.V\ra\\=
		\la \overline{U}.Z,\widetilde{V}\ra - \la Z.\overline{V}, \widetilde{U}\ra = \la (-Z.\overline{V},\overline{U}.Z), (\widetilde{U}, \widetilde{V})\ra.  
	\end{align*} 
	This implies the following
	 $$j_Z(U,V)=(-Z.\overline{V},\overline{U}.Z),$$
	from which one derives that
	
	  $j_Z^2(U,V)=(-Z.\overline{Z} U,-V.\overline{Z}.Z)= -\|Z\|^2(U,V)$ for all $U,V\in \mathbb A$. 
	  
	  These Lie algebras correspond to  the complex Heisenberg Lie algebra of dimension $6$  with pair $(4,2)$, and the $H$-type Lie algebras of dimension $12$ with pair $(8,4)$ and that of dimension $24$ with pair $(16,8)$. 
	  
	  \begin{example}
	  	Take the subset of $\vv=\mathbb A \times \mathbb A$ given by $\mathbb A \times \{0\}$. By the formulas above, it is clear that this is an abelian subalgebra. The same holds for the set $\{0\}\times \mathbb A\subset \vv$. 
	  \end{example}
	
	\begin{lem}\label{maxabelianHO} Let $\nn=\vv \oplus \zz$ denote a Lie algebra of type H as given in Construction (ii), for the division algebras of the  quaternions and octonions. The only maximal abelian subalgebras in $\vv$ of dimension $s=\dim \mathbb{A}$ are
	 the subspaces given by $\{0\}\times\mathbb{A}$ and $\mathbb{A}\times\{0\}$. 
\end{lem}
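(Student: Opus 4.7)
The plan is to classify all $s$-dimensional abelian subspaces $W$ of $\vv=\mathbb{A}\times\mathbb{A}$, show each equals either $\mathbb{A}\times\{0\}$ or $\{0\}\times\mathbb{A}$, and observe that these two subspaces are in fact maximal abelian inside $\vv$. The abelian condition on $W$ reads $U\widetilde{V}=\widetilde{U}V$ whenever $(U,V),(\widetilde{U},\widetilde{V})\in W$, and both named subspaces obviously satisfy it. They are maximal: if $(X,Y)\in\vv$ commutes with every $(u,0)\in\mathbb{A}\times\{0\}$ then $uY=0$ for all $u\in\mathbb{A}$, whence $Y=0$; the other case is symmetric.

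To show that no other $s$-dimensional abelian $W$ can occur, I would split into cases. If $W$ contains some $(U_1,0)$ with $U_1\neq 0$, bracketing with a generic $(U,V)\in W$ forces $U_1V=0$, hence $V=0$ by the division algebra property, so $W\subseteq\mathbb{A}\times\{0\}$ and equality follows from $\dim W=s$. The case $(0,V_1)\in W$ with $V_1\neq 0$ is symmetric. In the remaining case every nonzero element of $W$ has both components nonzero; fix one such $(U_0,V_0)\in W$. From $U_0V=UV_0$ and the division property of $\mathbb{A}$, the second-coordinate projection is injective on $W$, hence bijective by dimension, and $W=\{(f(V),V):V\in\mathbb{A}\}$ for a unique $\RR$-linear $f:\mathbb{A}\to\mathbb{A}$. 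Inserting $V_1=1$ into the abelian identity gives $f(V)=AV$ with $A:=f(1)$, and $AV_0=U_0$ forces $A\neq 0$. The abelian condition then reduces to
\begin{equation*}
(Ax)y=(Ay)x \qquad \text{for all } x,y\in\mathbb{A}.
\end{equation*}

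The main obstacle is to deduce $A=0$ from this identity, producing the desired contradiction. In $\mathbb{H}$ associativity rewrites the equation as $A(xy-yx)=0$, so $A=0$ because $\mathbb{H}$ is a non-commutative division algebra. In $\mathbb{O}$ I would employ the alternating associator $[A,x,y]=(Ax)y-A(xy)$ to recast the identity as $A(xy-yx)+2[A,x,y]=0$ and then test it on pairs $x=e_j$, $y=e_k$ of distinct imaginary basis units; using $e_je_k=-e_ke_j$ this collapses to $(Ae_j)e_k=0$, and the division algebra property forces $Ae_j=0$, hence $A=0$. This contradiction, together with the maximality of $\mathbb{A}\times\{0\}$ and $\{0\}\times\mathbb{A}$ established above, yields the lemma.
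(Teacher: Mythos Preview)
Your argument is correct and follows the same overall structure as the paper's proof: both dispose of the cases where one coordinate projection is trivial, then in the remaining case parametrize $W$ as a graph $\{(AV,V):V\in\mathbb{A}\}$ and derive a contradiction from $A\neq 0$. The endgames differ slightly. The paper exploits surjectivity of \emph{both} projections to place $(U,1)$ and $(1,U^{-1})$ in $W$, and comparing the two abelian relations forces $A=U$ into the center $\mathbb{R}$ of $\mathbb{A}$; once $A$ is real the abelian condition reads $A(VW-WV)=0$, and non-commutativity finishes the argument uniformly for $\mathbb{H}$ and $\mathbb{O}$. You instead work directly with the identity $(Ax)y=(Ay)x$, treating the two algebras separately and using an associator computation in the octonion case to obtain $(Ae_j)e_k=0$. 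Your route avoids the intermediate centrality step at the cost of a case split; the paper's route is more uniform but needs the extra pair of special elements.
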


\begin{proof}
Let $\ww$ be an abelian subalgebra of $\vv=\mathbb{A}\times\mathbb{A}$ with $\dim\ww = \dim\,\mathbb{A}$. Let $\pi_1:\ww \to \mathbb{A}$ denote the projection to the first component, while $\pi_2:\ww \to \mathbb{A}$ denote the projection to the second one.

Let $X,Y\in \ww$ such that  $\pi_1(X)=\pi_1(Y)$. Thus, $X$ and $Y$ have the same first component $U$. Now, notice that  for any element $U\in \mathbb{A}$, with  $U\neq 0$, and $V,W\in \mathbb{A}$ it holds:
\begin{align*}
	[(U,V),(U,W)]=0\Leftrightarrow U.W-U.V=0\Leftrightarrow U.(W-V)=0 \Leftrightarrow W=V.
\end{align*}
This implies that $\pi_1$ is bijective if it is nontrivial. And with the same reasoning one also gets that the projection onto the second component $\pi_2:\ww \to A$ is either trivial or bijective. 

Assume that both projections  $\pi_1$ and $\pi_2$ are bijective maps. Then, there exist elements $U,\,\widetilde{U}\in\mathbb{A}$ such that the pairs  $(U,1),\,(1,\widetilde{U})$ belong to the subalgebra $\ww$. 

 Since it holds
$$0=[(U,1),(1,\widetilde{U})]=U.\widetilde{U}-1,$$
one derives that $\widetilde{U}$ is the inverse of $U$,  $\widetilde{U}=U^{-1}$.
Now, for any  $V\in \mathbb{A}$, there exists $\widetilde{V}\in \mathbb{A}$ such that $(\widetilde{V},V)\in\ww$. By taking the Lie bracket one gets 
$$[(U,1),(\widetilde{V},V)]=[(1,U^{-1}),(\widetilde{V},V)]=0,$$
which gives $U.V-\widetilde{V}=V - \widetilde{V}U^{-1}=0$. Thus we obtain
$$U.V=\widetilde{V}=V.U$$
Since $V$ is arbitrary, the relation above says that $U$ is in the center of $\mathbb{A}$ i.e. $U\in\mathbb{R}$. From our computations, all elements of $\ww$ are of the form $(U.V,V)$ for $V\in\mathbb{A}$. But for any $V,W\in \mathbb A$ and $U\neq 0$, we have
$$0=[(U.V,V),(U.W,W)]=(U.V).W-(U.W).V=U.(V.W-W.V),$$
where the associativity holds since $U\in \RR$. Thus, $V.W-W.V=0$, which says that  all elements in $\mathbb{A}$ must commute, which is a contradiction.
Thus one of the projections $\pi_1$ or $\pi_2$ must be trivial. 
\end{proof}

 All Lie algebras obtained either by Construction  (i)  or (ii) are irreducible. The other irreducible $H$-type Lie algebras with $\dim\zz \leq 8$ have pairs  $(8,5)$ and $(8,6)$. These can be derived  from the $(8,7)$ pair by using the following general construction.   

\

 {\em Construction (iii).} Let $\nn=\vv\oplus\zz$ be the orthogonal decomposition of the 2-step nilpotent Lie algebra $\nn$, such that  $\widetilde{\zz}$ is a subspace of the center $\zz$ and  the map $\pi:\zz \to \widetilde{\zz}$ denotes the orthogonal projection. 

Take the direct sum vector space $\widetilde{\nn}=\vv \oplus \widetilde{\zz}$ and define the Lie bracket by $$[V+Z,V'+Z']_{\widetilde{\nn}}=\pi([V,V']_{\nn}) \quad \mbox{ for all } V, W\in \vv, Z, Z'\in \widetilde{\zz}. $$
Equip the Lie algebra $\widetilde{\nn}$ with the restricted metric induced from $\la\,,\,\ra$ on $\nn$. 
Now, notice that  whenever the 2-step nilpotent Lie algebra  $\nn$ is  of type H (non-singular) then the Lie algebra  $\widetilde{\nn}$ is  a Lie algebra of Type H (non-singular). It is sufficient to observe that the maps $j_Z:\vv\to\vv$ are the same in both Lie algebras for every $Z\in\widetilde{\zz} \subset\zz$.

\section{Left-invariant 2-forms  and magnetic fields}\label{closedforms}

In this section we study closed invariant 2-forms, which are  called {\em magnetic fields},  on any 2-step nilpotent Lie group. We shall see that the closeness condition imposes some  restrictions. 

  Let $\omega$ denote a left-invariant 2-form on a  Lie group $(N, \la\,,\,\ra)$ equipped with a left-invariant metric. The metric on $N$ determines a metric on the corresponding Lie algebra $\nn$ such that  there is a unique skew-symmetric endomorphism $F:\nn \to \nn$ satisfying
$$\omega(X,Y)= \la F(X), Y\ra, \quad \mbox{ for all } X,Y\in \nn.$$

Conversely, let $F:\nn \to \nn$ denote a skew-symmetric endomorphism on the Lie algebra.  Define  $\omega$ as the associated 2-form given by $\omega(X,Y)= \la F(X),Y\ra$.

Thus, left-invariant 2-forms on $(N, \la\,,\,\ra$) are in correspondence with skew-symmetric endomorphisms $F\in\sso(\nn)$. Such skew-symmetric map $F$ is known as {\em Lorentz force}.

Now,  the condition of asking the 2-form $\omega$ to be closed  is equivalent to ask the skew-symmetric map $F$ to satisfy the equation:
\begin{equation}\label{clos}	\la F(U), [V,W]\ra + \la F(V), [W,U]\ra + \la F(W), [U,V]\ra=0, \qquad \mbox{ for all } U,V,W\in \nn.
	\end{equation}

Assume now  that the Lie group $N$ is  2-step nilpotent with Lie algebra $\nn$ which decomposes into the orthogonal splitting $\nn=\vv \oplus \zz$ as in Equation  \eqref{decomp2}. 
Write $F_{\vv}$ and $F_{\zz}$ for the corresponding projections onto the subspaces $\vv$ and $\zz$, respectively.  Then, the 2-form $\omega$ associated to $F$ is closed if and only if the following conditions hold
$$(\rm{C1})\qquad \qquad \qquad 	F_{\zz}(Z)\in C(\nn)^{\perp}, \qquad \mbox{ for all } Z\in \zz,  
$$
and
$$(\rm{C2}) \qquad 	\la F_{\zz}(U), [V,W]\ra + \la F_{\zz}(V), [W,U]\ra + \la F_{\zz}(W), [U,V]\ra=0, \quad \mbox{ for all } U,V,W\in \vv,
$$
which can be obtained by analyzing Condition \eqref{clos} in terms of the projections onto the subspaces $\vv$ and $\zz$.

\begin{rem}\label{ztrivial}
	Any skew-symmetric map on the Lie algebra $\nn$ such that the projection $F_{\zz}\equiv 0$ trivially satisfies Equations (C1) and (C2), since $\la F_{\zz}(U), [V,W]\ra\equiv 0$. In particular any exact 2-form satisfies this. 
\end{rem}

The skew-symmetric map $F:\nn \to \nn$ uniquely decomposes as
$$F=F_1+F_2,$$
where $F_1$ and $F_2$ are skew-symmetric maps such that, with respect to the orthogonal splitting  $\nn=\vv\oplus\zz$ in Equation \eqref{decomp2}, one has:
\begin{itemize}
	\item $F_1$ preserves the decomposition: $F_1(V+Z)=F_\vv(V) + F_\zz(Z)$, 
	\item $F_2$ interchanges the subspaces $\vv$ and $\zz$: $F_2(V+Z)=F_\zz(V) + F_\vv(Z)$ for all $V\in\vv$, $Z\in\zz$.
\end{itemize} 

In fact, for any skew-symmetric map on the Lie algebra $\nn$, $F\in \sso(\nn)$, one has the decomposition $F=F_{\zz}+F_{\vv}$ an so,  
$$F(V+Z)= (F_{\vv}(V)+F_{\zz}(Z))+( F_{\vv}(Z)+ F_{\zz}(V)), \quad \mbox{ for all } V\in \vv, Z\in \zz. $$
Thus, take the maps $F_1$ and $F_2$ respectively as above. 


The skew-symmetry property from $F$ implies that  both $F_1$ and $F_2$ are skew-symmetric. 

Notice that
\begin{itemize}
	\item $F_1$ trivially satisfies Condition (C2) and 
	\item $F_2$ trivially satisfies Condition (C1).
\end{itemize}

Thus the skew-symmetric map $F$ gives rise to a closed 2-form if and only if for both $F_1$ and $F_2$ it holds:
\begin{itemize}
	\item $F_1$  satisfies Condition (C1), that is $F_1(\zz)\subseteq \ker(j)$ and 
	\item $F_2$  satisfies Condition  (C2).
\end{itemize}

\begin{defn}\label{def1}
	Let $F$ denote a skew-symmetric map on a 2-step nilpotent Lie algebra $(\nn, \la\,,\,\ra)$. We say that
	\begin{enumerate}
		\item $F$ is of type I, if $F$ preserves the decomposition $\vv \oplus \zz$ (so, $F=F_1$),
\item $F$ is of type II, if $F(\vv)\subseteq \zz$ and $F(\zz)\subseteq \vv$ (so, $F=F_2$ above).
	\end{enumerate}
\end{defn}

Now, we shall study exact 2-forms. Start with a left-invariant 1-form $\eta$. Consider the linear isomorphism between the Lie algebra $\nn$ and its dual space $\nn^*$ given by the metric, that is, sending $U \to \ell_U$, where $\ell_{U}(V)=\la U, V\ra$. 

By considering the decomposition of the Lie algebra  $\nn$ given in  Equation \eqref{decomp2} one can write any left-invariant $1$-form $\eta$ as $\eta=\ell_{\widetilde{Z} +\widetilde{V}}$. Easily one verifies that the differential follows
$$d\ell_{\widetilde{Z} +\widetilde{V}}(U,V)=\la \widetilde{Z},[U,V]\ra=\la j(\widetilde{Z})U,V\ra.$$
Thus, the kernel of the differential operator $d:\{\mbox{1-forms on }\nn \} \to \{\mbox{ 2-forms on } \nn \}$ contains  the subspace $\{\ell_{{V}}, \,\mbox{ with } V\in \vv\}$, while for the description of  the rank of $d$ notice that  $d\eta\neq 0$ if and only if $j_{\widetilde{Z}}\neq 0$. That is, $\widetilde{Z}$ belongs to the commutator of $\nn$, $C(\nn)$. In this case,   for the 2-form $d\ell_{\widetilde{Z}}$, the corresponding Lorentz force $F$ is given by $F=j_{\widetilde{Z}}$ for some non-trivial $\widetilde{Z}\in C(\nn)$. 

We already proved the next result. 

\begin{prop} \label{prop1} Let $(N, \la\,,\,\ra)$ denote a 2-step nilpotent Lie group equipped with a left-invariant metric and Lie algebra $\nn$ with ortogonal splitting $\nn=\vv\oplus \zz$ as in \eqref{decomp2}. Let $F:\nn \to \nn$ denote a linear map. Write the map  $F$  as 
	$$F=F_1+F_2, $$
	where $F_1(\vv)\subseteq \vv$ and $F_1(\zz)\subseteq \zz$,  while $F_2(\zz)\subseteq \vv$ and $F_2(\vv)\subseteq \zz$. Then
	\begin{enumerate}[(i)]	
		\item The skew-symmetric map $F$ gives rise to a closed 2-form if and only if both maps $F_1$ and $F_2$ are skew-symmetric and
			\begin{itemize}	\item $F_1$  satisfies \rm{Condition } \rm{(C1)} and 
		\item $F_2$  satisfies \rm{Condition} \rm{(C2)}.

		\end{itemize}
		\item Exact left-invariant 2-forms are in one-to-one correspondence with skew-symmetric maps $j_{\widetilde{Z}}$ with $\widetilde{Z}\in C(\nn)$. 
	\end{enumerate}
\end{prop}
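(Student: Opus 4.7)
\medskip

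\noindent\textbf{Proof proposal.} My plan for part (i) is to test the cocycle equation \eqref{clos} on a generic triple $(U,V,W)\in\nn^3$ after splitting each vector into its $\vv$ and $\zz$ components, and then exploit that $\nn$ is 2-step nilpotent, so $[\zz,\nn]=0$. Consequently, $[X,Y]$ vanishes as soon as one of $X,Y$ lies in $\zz$, and every bracket in \eqref{clos} lives in $C(\nn)\subseteq\zz$. This immediately trivializes the cases where two or three of $U,V,W$ lie in $\zz$. The two genuinely nontrivial cases are (a) all three vectors in $\vv$, which forces $[V,W],[W,U],[U,V]\in C(\nn)\subseteq\zz$ and therefore only the $\zz$-components $F_\zz(U), F_\zz(V), F_\zz(W)$ enter the pairing, yielding exactly (C2); and (b) exactly one of them, say $W$, in $\zz$, in which case the first two bracket terms vanish and the surviving relation reads $\la F_\zz(W),[U,V]\ra=0$ for all $U,V\in\vv$, which is precisely (C1).

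The next step is to translate these two conditions into separate requirements on $F_1$ and $F_2$. By construction, $F_\zz$ restricted to $\vv$ is $F_2|_\vv$ while $F_\zz$ restricted to $\zz$ is $F_1|_\zz$, so (C2) is exclusively a statement about $F_2$, and (C1) is exclusively a statement about $F_1$; no cross-terms survive. For the skew-symmetry assertion, I would argue block-wise: because $\vv\perp\zz$, writing $F$ as a block matrix with respect to \eqref{decomp2} splits it into a block-diagonal part (which is $F_1$) and an off-diagonal part (which is $F_2$), and skew-symmetry of $F$ in an orthonormal basis adapted to $\vv\oplus\zz$ forces each block piece to be skew-symmetric on its own. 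Combining the three observations gives the equivalence claimed in (i).

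For part (ii), I would just finish the computation already sketched in the text. Any left-invariant 1-form is $\eta=\ell_{\widetilde Z+\widetilde V}$ for unique $\widetilde Z\in\zz$, $\widetilde V\in\vv$, and
\[
d\eta(X,Y) \;=\; -\eta([X,Y]) \;\text{(up to the convention of the text)}\; =\; \la \widetilde Z+\widetilde V,[X,Y]\ra .
\]
Since $[X,Y]\in C(\nn)\subseteq\zz$ and $\widetilde V\in\vv\perp\zz$, the $\widetilde V$-contribution drops out, so $d\eta=d\ell_{\widetilde Z}$ and the associated Lorentz force is $j_{\widetilde Z}$ (extended by zero on $\zz$) via $\la j_{\widetilde Z}X,Y\ra=\la \widetilde Z,[X,Y]\ra$. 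This makes the map $\widetilde Z\mapsto d\ell_{\widetilde Z}$ from $\zz$ onto the space of exact left-invariant 2-forms surjective, with kernel exactly $\ker j$. Using the orthogonal decomposition $\zz=C(\nn)\oplus\ker j$ recalled in Section \ref{general} and the fact (also recalled there) that $j$ is injective on $C(\nn)$, restricting the parameter to $\widetilde Z\in C(\nn)$ yields the claimed bijection.

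The calculations are essentially bookkeeping; no serious obstacle arises. The only point that requires a little care is the case analysis in part (i), where one must verify that the single surviving term in case (b) really does produce (C1) for every $Z\in\zz$ and not only for $Z\in C(\nn)$, so that the orthogonality condition $F_1(\zz)\subseteq \ker j=C(\nn)^\perp\cap\zz$ captures exactly the full content of the cocycle identity on triples of mixed type.
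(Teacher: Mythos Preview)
Your proposal is correct and follows essentially the same route as the paper: the paper derives (C1) and (C2) by ``analyzing Condition \eqref{clos} in terms of the projections onto the subspaces $\vv$ and $\zz$'', which is exactly your case split on how many of $U,V,W$ lie in $\zz$, and then observes (as you do via the block-matrix argument) that skew-symmetry of $F$ forces skew-symmetry of $F_1,F_2$ and that $F_\zz|_\zz=F_1|_\zz$, $F_\zz|_\vv=F_2|_\vv$, so (C1) concerns only $F_1$ and (C2) only $F_2$. Your treatment of part (ii) likewise matches the paper's computation $d\ell_{\widetilde Z+\widetilde V}(U,V)=\la\widetilde Z,[U,V]\ra=\la j_{\widetilde Z}U,V\ra$ together with the splitting $\zz=C(\nn)\oplus\ker j$; you simply spell out the surjectivity/kernel argument that the paper leaves implicit.
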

\begin{rem} Note that if the 2-step nilpotent Lie group $N$ has no Euclidean factor, then any skew-symmetric map $F$ of type I trivially satisfies Condition (C1), so that it always gives rise to a closed 2-form. Moreover, the linear map $F$ is trivial on the center.  
\end{rem}

\begin{example} Let $V_0+Z_0$ be any element on a 2-step nilpotent Lie algebra $\nn$. Then a natural choice for $F$ is the skew-symmetric part of $\ad(V)$: $\ad(V_0+Z_0)-\ad(V_0+Z_0)^*=\ad(V_0)-\ad(V_0)^*$, where $\ad(X)^*$ denotes the adjoint of  $\ad(X)$ with respect to the metric. Notice that $\ad(V_0)(\vv)\subseteq \zz$ and $\ad(V_0)^*(\zz)\subset \vv$. Thus $\ad(V_0)-\ad(V_0)^*$ gives rise to a closed 2-form if and only if Condition (C2) holds, equivalently:
	$$\la [V_0, V], [U,W]\ra + \la [V_0,U], [W,V]\ra + \la [V_0,W], [V, U]\ra=0\mbox{ for all } U,V,W\in \vv.$$
Which kind of elements $V_0\in \vv$ may satisfy this equation?
Take the Heisenberg Lie algebra of dimension 2n+1, $n\geq 2$, in Example \ref{ExHeis}. Assume $V_0=\sum_i x_iX_i + \sum_i y_iY_i$. By taking $U=Y_i, V=X_j$ and $W=Y_j$ with $i\neq j$, one gets $x_i=0$ for every $i$. Analogously, by taking $U=X_i$, one finally obtains $V_0=0$.  This is an example of a general statement proved in  Lemma \ref{closedF1}.
	\end{example}
\begin{example}\label{closedonh}
	Let $\hh_3$ denote the Heisenberg Lie algebra of dimension three. 	Let $e^i$, i=1,2,3 denote the dual basis of the orthonormal basis $e_1, e_2, e_3$. With the convention $e^{ij}=e^i\wedge e^j$, clearly the 2-forms $e^{12}, e^{13}, e^{23}$ give a basis of the space of 2-forms on $\nn$.  Since $\la j(e_3)e_1, e_2\ra=\la e_1, [e_2,e_3]\ra=1$, then $e^{12}$ is exact. 
	
	Moreover, 
	\begin{itemize}
		\item $e^{13}$ has associated the skew-symmetric map given by $F_{13}(e_1)=e_3$, $F_{13}(e_3)=-e_1$ and $F_{13}(e_2)=0$, 
		\item $e^{23}$ has associated the skew-symmetric map given by $F_{23}(e_2)=e_3$, $F_{23}(e_3)=-e_2$ and $F_{23}(e_1)=0$, 
			\end{itemize}
	which gives $F=\alpha j_{e_3}+ \beta F_{13} + \rho F_{23}$ with matrix representation in the orthonormal basis $e_1, e_2, e_3$
	$$F=\left( \begin{matrix}
		0 & - \rho & - \beta\\
		\rho & 0 & -\alpha\\
		\beta & \alpha & 0
	\end{matrix}
\right).
$$ The corresponding 2-forms are closed. 
In fact, a skew-symmetric map $F_2$ of type II will be  of the form $F_2=\beta F_{13} + \alpha F_{23}$. And easy computations show that the skew-symmetric map $F_2$  always satisfies Condition (C2). 

We just proved that any left-invariant 2-form on the Heisenberg Lie group of dimension three, $H_3$, is closed.
\end{example}

Notice that any skew-symmetric map of type II, namely  $F_2$, gives rise to a 2-form $\omega$ satisfying the condition 
$$\omega(Z,\widetilde{Z})=0, \quad \mbox{ for all } Z, \widetilde{Z}\in \zz.$$
This means, that the center is ``isotropic'' for $\omega$. In this situation, only one of  the two following conditions is true:
\begin{enumerate}[(i)]
		\item either $\omega(\zz, \nn)=0$ or
	\item there is $U\in \nn-\zz$ such that $\omega(Z, U)\neq 0$ for some $Z\in \zz$. 
\end{enumerate}
Indeed the non-trivial cases correspond to the second condition (ii). Due to the correspondence between 2-forms and skew-symmetric maps, one can obtain a description of 2-forms in terms of skew-symmetric maps of type I or II, as follows. 

As above, let $\la\,,\,\ra$ denote a metric on the 2-step nilpotent Lie algebra $\nn$ and take the orthogonal decomposition $\nn=\vv \oplus \zz$ as in Equation \eqref{decomp2}. Let $F:\nn \to \nn$ be the skew-symmetric map on $\nn$ such that $\omega(V+Z, \widetilde{V}+\widetilde{Z})=\la F(V+Z),\widetilde{V}+\widetilde{Z}\ra$. 

The condition of the center to be isotropic says that $\la F(Z),\widetilde{Z}\ra=0$ for all $Z, \widetilde{Z}\in \zz$, that is $F_{\zz}(Z)=0$, for any $Z\in \zz$,  which in terms of the families we introduce previously, gives:
$$F(Z+V)=F_1(V)+F_2(Z)+F_2(V)\quad \mbox{ for all } V+Z\in \nn.$$
But in this situation, the corresponding 2-form $\omega$ is closed if and only if
 $F_2$ satisfies Condition (C2). In fact, any skew-symmetric linear map $F_1$ with $F_1(\zz)\equiv 0$ is closed as already noticed  in Remark \ref{ztrivial}. 

On the other hand, $\omega(Z,U)\neq 0$ if and only if $\la F(Z), U\ra \neq 0$ which is equivalent to  $\la F_2(Z),U\ra \neq 0$ for some $Z\in \zz$ and some $U\in \nn-\zz$.  And this must occur for any metric. 

\begin{prop}\label{prop2} Let $\nn$ denote a 2-step nilpotent Lie algebra. \begin{enumerate}[(i)]
			\item There is a non-trivial closed 2-form $\omega$ for which either  $\omega(\zz, \zz)\neq 0$ or $\omega(\zz, \nn)=0$ if and only if  for any metric $\la\,,\, \ra$ on the Lie algebra $\nn$ there is a non-trivial skew-symmetric map  of type {\rm I} satisfying Condition {\rm(C1)}.
		\item There is a non-trivial closed 2-form $\omega$ for which $\omega(\zz, \zz)=0$ but $\omega(\zz, \nn)\neq 0$ if and only if for any metric $\la\,,\, \ra$ on the Lie algebra $\nn$ there is a non-trivial skew-symmetric map  of type {\rm II}  satisfying Condition {\rm (C2)}.
	
	\end{enumerate}
	\end{prop}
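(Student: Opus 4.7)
The plan is to translate the hypotheses on the 2-form $\omega$ into conditions on the block decomposition of its Lorentz force, and then read the equivalences off Proposition \ref{prop1}. Fix any metric $\la\,,\,\ra$ with associated splitting $\nn=\vv\oplus\zz$, and write the skew-symmetric map $F$ corresponding to $\omega$ as $F=F_1+F_2$ with $F_1$ of type I and $F_2$ of type II. Because $F_1(\zz)\subseteq \zz$, $F_1(\vv)\subseteq \vv$, $F_2(\zz)\subseteq \vv$, $F_2(\vv)\subseteq \zz$, and $\vv\perp\zz$, the relevant pairings of $\omega$ are
\[
\omega(Z,\widetilde Z)=\la F_1 Z,\widetilde Z\ra, \qquad \omega(Z,V)=\la F_2 Z,V\ra,
\]
for $Z,\widetilde Z\in\zz$ and $V\in\vv$. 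I will also use the skew-symmetry identity $\la F_2 V,Z\ra=-\la V,F_2 Z\ra$, which together with $F_2(V)\in\zz$ yields the useful equivalence $F_2|_\zz=0 \Leftrightarrow F_2=0$. Finally, Proposition \ref{prop1}(i) says that closeness of $\omega$ decouples into (C1) for $F_1$ and (C2) for $F_2$.

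For part (i), in the forward direction I examine the two disjuncts. If $\omega(\zz,\zz)\neq 0$, the first pairing formula gives $F_1|_\zz\neq 0$, so $F_1$ is nontrivial. If instead $\omega(\zz,\nn)=0$, both pairings on the center vanish, forcing $F_1|_\zz=0$ and $F_2|_\zz=0$; the skew-symmetry observation then yields $F_2=0$, so $F=F_1$ is nontrivial because $\omega$ is. In both cases $F_1$ satisfies (C1) by Proposition \ref{prop1}. For the reverse direction I take a nontrivial $F_1$ of type I satisfying (C1) and define $\omega(X,Y)=\la F_1 X,Y\ra$; this is closed by Proposition \ref{prop1} (with $F_2=0$, Condition (C2) is vacuous, cf.\ Remark \ref{ztrivial}). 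Depending on whether $F_1|_\zz=0$ or not, the pairing formulas give respectively $\omega(\zz,\nn)=0$ or $\omega(\zz,\zz)\neq 0$, and nontriviality of $F_1$ ensures nontriviality of $\omega$.

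For part (ii), in the forward direction the hypothesis $\omega(\zz,\zz)=0$ combined with $F_1(\zz)\subseteq\zz$ and the first pairing formula forces $F_1|_\zz=0$; then $\omega(\zz,\nn)\neq 0$ must come from the second pairing, giving $F_2|_\zz\neq 0$, and closeness produces (C2) for $F_2$. Conversely, starting from a nontrivial $F_2$ of type II satisfying (C2), Proposition \ref{prop1} yields a closed 2-form $\omega$ with isotropic center (the first pairing vanishes since there is no $F_1$ part), and the skew-symmetry observation guarantees $F_2|_\zz\neq 0$, hence $\omega(\zz,\nn)\neq 0$. There is no substantive obstacle in this argument: once the pairing formulas are written down and the skew-symmetry equivalence $F_2|_\zz=0\Leftrightarrow F_2=0$ is isolated, the rest is bookkeeping against the decomposition $F=F_1+F_2$ and the splitting of the closedness condition provided by Proposition \ref{prop1}.
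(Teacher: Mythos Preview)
Your proof is correct and follows essentially the same approach as the paper: the paper's argument is the discussion immediately preceding the proposition, which likewise fixes a metric, decomposes $F=F_1+F_2$, reads off the pairing identities $\omega(Z,\widetilde Z)=\la F_1 Z,\widetilde Z\ra$ and $\omega(Z,V)=\la F_2 Z,V\ra$, and invokes Proposition~\ref{prop1} to separate (C1) and (C2). Your write-up is in fact somewhat more explicit in isolating the equivalence $F_2|_\zz=0\Leftrightarrow F_2=0$ and in checking both disjuncts of part~(i) separately, but the underlying mechanism is identical.
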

 
 
 Moreover, in terms of Definition \ref{def1}, the proposition above enables to distinguish 2-forms into two families. We shall say that a  2-form $\omega$ on the Lie algebra $\nn$ is
 \begin{enumerate}[(i)]
 	\item  {\em of type} { \rm I}: if it satisfies  either  $\omega(\zz, \zz)\neq 0$ or $\omega(\zz, \nn)=0$,
 	\item  {\em of type} {\rm{II}}: if it satisfies $\omega(\zz, \zz)=0$ but $\omega(\zz, \nn)\neq 0$.
 \end{enumerate}

 \begin{example} \label{hcomplex}
 	Let $\hh(\CC)$ denote the Heisenberg Lie algebra over $\CC$. Consider the underlying real Lie algebra of dimension six, that we denote in the same way. It has a center of dimension two spanned by $Z_1,Z_2$ and complementary subspace of dimension four spanned by the vectors $X_1,Y_1,X_2, Y_2$. They satisfy the non-trivial Lie bracket relations
 	$$[X_1,Y_1]=-[X_2,Y_2]=Z_1,\qquad [X_1,Y_2]=[X_2,Y_1]=Z_2.$$
 	The left multiplication by $i$ induces a real linear map $J:\hh(\CC) \to \hh(\CC)$ satisfying $J^2=-Id$ and $J\circ \ad(U)=\ad(U)\circ J$ for all $U\in \hh(\CC)$. Explicitly, in the basis, one has
 	$$J(Z_1)=Z_2, \quad J(X_1)=X_2, \quad J(Y_1)=Y_2.$$
 	Take the metric on $\hh(\CC)$ making  the set $Z_i,X_i,Y_i$ for  i=1,2, an orthonormal basis. Clearly the complex structure $J$ is skew-symmetric with respect to this metric $\la\,,\,\ra$. 
 	
 	Assume that $F$ is a skew-symmetric map on the Lie algebra giving rise to a closed 2-form. Write $F=F_1+F_2$ as above. Since this Lie algebra is non-singular, the kernel of $j$  is trivial, $\ker j=\{0\}$, and  the linear map  $F_1$ trivially satisfies Condition (C1). On the other hand, for   $V, W\in \vv$, the  condition (C2) for $F_2$ gives
 $$\begin{array}{rcl}
 	0 & = &\la F_2(V), [JV,W]\ra + \la F_2(JV), [W,V]\ra + \la F_2W, [V,JV]\ra\\
 &	= &\la F_2(V), J[V,W]\ra + \la F_2(JV), [W,V]\ra + \la F_2W, J[V,V]\ra\\
 &	= &\la F_2(V), J[V,W]\ra - \la F_2(JV), [V,W]\ra\\
 &	= & \la -JF_2(V)-F_2(JV), [V,W]\ra.
 \end{array}$$
 Since $W$ is an  arbitrary element and $\ad(V)$ is onto the center $\zz$ we get $	F_2(JV)=-JF_2(V)$,  for every $ V\in \vv$. And since $F_2$ is skew-symmetric it holds on $\nn$: 
 \begin{equation}\label{anticomp}
 	F_2 \circ J=-J\circ F_2.  
 \end{equation}
 Conversely,  any skew-symmetric map $F_2:\hh(\CC)\to \hh(\CC)$ that verifies Equation  \eqref{anticomp} gives rise to a closed 2-form of type II.
 \end{example}

 The next result determines a condition on the dimension of the Lie algebra and its center for the non-existence of  magnetic fields of type II, i.e. the non-existence of skew-symmetric maps of type II. See Proposition \ref{prop2}. 
 
 \begin{lem}\label{closedF1} 	Let $\nn$ denote a non-singular 2-step nilpotent Lie algebra such that $\dim \nn > 3 \dim \zz$. Then any closed 2-form on $\nn$ satisfies 
 	$$\omega(Z,U)=0, \quad \mbox{ for all } Z\in \zz, U\in \nn.$$
 \end{lem}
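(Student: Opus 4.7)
The plan is to show $F(\zz) = 0$, where $F$ is the Lorentz force corresponding to $\omega$; this is exactly the assertion $\omega(Z,U)=0$ for all $U \in \nn$. Writing $F = F_1 + F_2$ as in Proposition \ref{prop1}, the type-I part is disposed of immediately: Condition (C1) forces $F_1(Z) \in \ker j$, and non-singularity gives $\ker j = 0$ (if $Z \ne 0$ were in $\ker j$, then $j_Z = 0$ would contradict the invertibility of $j_Z$). So $F_1|_{\zz} \equiv 0$, and the theorem reduces to showing that the type-II piece $F_2$ vanishes; equivalently, by skew-symmetry of $F_2$, that the linear map $\alpha := F_2|_{\vv} : \vv \to \zz$ is identically zero.

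Using the defining relation $\la Z, [V,W]\ra = \la j_Z V, W\ra$, I would rewrite Condition (C2) for $F_2$ as
$$\la j_{\alpha(U)}V, W\ra + \la j_{\alpha(V)}W, U\ra + \la j_{\alpha(W)}U, V\ra = 0 \qquad \text{for all } U,V,W \in \vv.$$
The main idea is to restrict this identity to the kernel $K := \ker \alpha$. Setting $U \in K$ kills the first summand, and rewriting the third one via the skew-symmetry of $j_{\alpha(W)}$ yields
$$\la j_{\alpha(V)}W - j_{\alpha(W)}V,\, U\ra = 0$$
for every $V, W \in \vv$ and every $U \in K$. A further specialization $V \in K$ removes the first vector and produces $j_{\alpha(W)}(K) \subseteq K^{\perp}$ for every $W \in \vv$, where $K^{\perp}$ is the orthogonal complement of $K$ inside $\vv$.

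To conclude, I would count dimensions. Rank-nullity gives $\dim K \ge \dim\vv - \dim\zz$, and the hypothesis $\dim\nn > 3\dim\zz$ forces $\dim\vv > 2\dim\zz$, hence $\dim K > \dim\vv/2$, and in particular $\dim K > \dim K^{\perp}$. If $\alpha$ were nonzero, pick $W \in \vv$ with $\alpha(W) \ne 0$; by non-singularity $j_{\alpha(W)}$ is invertible on $\vv$, so its restriction to $K$ injects $K$ into $K^{\perp}$, contradicting the dimension bound. Hence $\alpha \equiv 0$, finishing the proof.

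The only genuinely substantive step is the double specialization $U, V \in K$: once one recognizes that this collapses the cocycle identity (C2) to the geometric containment $j_{\alpha(W)}(K) \subseteq K^{\perp}$, the invertibility supplied by non-singularity combined with the dimension hypothesis closes out the argument almost mechanically. I do not expect any serious technical obstacle elsewhere.
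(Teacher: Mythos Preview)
Your proof is correct and follows essentially the same route as the paper's: both reduce to showing $F_2|_{\vv}=0$, set $K=\ker(F_2|_{\vv})$, use rank-nullity with the hypothesis $\dim\vv>2\dim\zz$ to get $\dim K>\dim\vv/2$, and combine Condition~(C2) on two elements of $K$ with the invertibility of $j_Z$ (for $Z$ in the image of $F_2|_{\vv}$) to reach a contradiction. The only cosmetic difference is the order of the last two moves: you first extract from (C2) the containment $j_{\alpha(W)}(K)\subseteq K^{\perp}$ and then invoke the dimension bound, whereas the paper first uses the dimension bound to produce a nonzero element of $K\cap j_Z K$ and then plugs it into (C2).
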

 \begin{proof}
  	Firstly, notice that since the Lie algebra is non-singular, its commutator coincides with the center,  $C(\nn)=\zz$. Let $\omega$ denote a closed two-form on $\nn$, then the non-singularity  property implies that  $\omega(\zz,\zz)=0$. In fact, let $Z,  \widetilde{Z}\in \zz$ with $Z=[U,V]$ for $U,V\in \nn$. The closeness condition says that $\omega(\widetilde{Z}, [U,V])=0$. 
 	
 	By the contrary, assume that there exists a non-trivial closed 2-form $\omega$ on the non-singular 2-step nilpotent Lie algebra $\nn$   such that there are  $Z\in \zz$ and  $U\in \nn-\zz$ satisfying  $\omega(Z,U)\neq 0$, that is $\omega$ is of type II. 	
 	
 	Let $\la\,,\,\ra$ be a metric on $\nn$ inducing a orthogonal decomposition   $\nn=\vv\oplus \zz$ as in Equation \eqref{decomp2}. Notice that by hypothesis, $\dim\vv> 2\dim \zz$. 
 	
 	Let $F$ denote the skew-symmetric map on the Lie algebra $\nn$ such that $\omega(X,Y)=\la F(X), Y\ra$. Then 
 	$$\omega(Z, U)\neq 0 \mbox{ if and only if } \la F(Z),U \ra \neq 0 \mbox{ if and only if }\la F_2(Z),U\ra\neq 0,$$
 	where $F_2$ is taken as in Proposition \ref{prop1}. Since $\la Z, F_2(U)\ra \neq 0$ says that the image of $F_2|_{\vv}$ is non-trivial,  we may assume (changing $Z$ if necessary) that $F_2(U)=Z$, so that $\la Z, F_2(U)\ra = \la Z, Z\ra\neq 0$. 
 	
 	Let $\mathcal W$ denote the kernel of $F_2$ on $\vv$, $\mathcal W=\ker(F_2|_{\vv})$. Thus one has:
 	$$\dim \mathcal W=\dim \vv - \dim Image(F_2|_{\vv})\geq \dim \vv - \dim \zz > \dim \vv/2$$
 	
 	Since $j_Z$ is non-singular, it also holds $\dim j_Z\mathcal W>\dim \vv /2$, which implies that the intersection is nontrivial,  $\mathcal W \cap j_Z \mathcal W\neq \{0\}$. Let $W, \widetilde{W}\in \mathcal W$ such that $j_Z W=\widetilde{W}\neq 0$. Now, the closeness condition for the 2-form $\omega$ is equivalent to Condition (C2) for $F_2$. And for $W, \widetilde{W}, U$ we get
 	$$0=\la F_2 W, [\widetilde{W}, U]\ra+\la F_2 \widetilde{W},[U,W]\ra+\la F_2 U, [W,\widetilde{W}]\ra= \la Z, [W, \widetilde{W}]\ra = \la \widetilde{W}, \widetilde{W}\ra\neq 0,$$
 	which is a contradiction. Thus, there are no closed 2-forms of type II under the hypothesis. 
 \end{proof}

  In the next section, we shall determine the Lie algebras of type H admitting Lorentz forces of type II. The previous result does not hold for singular or almost non-singular Lie algebras, proved below.

  \begin{example} {\bf A singular example.} Let $\nn$ denote the  Lie algebra $\nn=\vv \oplus \zz$, where $\vv$ is spanned by the vectors $V_1, V_2, V_3, V_4, V_5$ and $\zz$ is spanned by $Z_1, Z_2$, and they obey the non-trivial Lie bracket relations:
  $$[V_1, V_2]=Z_1, \qquad [V_3,V_4]=Z_2=[V_4,V_5].$$
  Take the metric on $\nn$ that makes of the previous basis an orthonormal basis. 
  
  Let $F: \nn \to \nn$ denote the skew-symmetric map given by
  $$F(V_3)=Z_2 =-F(V_5), \qquad F(V_4)=Z_1, \quad F(V_i)=0, i=1,2.$$
  Usual computations show that the 2-form given as $\omega(X,Y)=\la F(X),Y\ra$ is closed. 
  
  This is an example of a singular Lie algebra (that is, every $j_Z$ is singular) admitting a closed 2-form of type II. In fact, every map $j_{z_1Z_1 + z_2 Z_2}: \vv \to \vv$ have a matrix of the form
  $$\left( \begin{matrix}
  	0 & -z_1 & 0 & 0 & 0 \\
  	z_1 & 0  &  0 & 0 & 0\\
  	0 & 0 & 0 & -z_2 & 0 \\
  	0 & 0 & z_2 & 0 &-z_2 \\
  	0 & 0 & 0  & z_2 & 0
  \end{matrix}
  \right)
  $$
  in the basis of $\vv$ given above. And $\dim \nn> 3 \dim \zz$.

  \end{example}
  
   \begin{example} {\bf An almost non-singular example.} Let $\nn$ denote the  Lie algebra $\nn=\vv \oplus \zz$, where $\vv$ is spanned by the vectors $V_1, V_2, V_3, V_4$ and $\zz$ is spanned by $Z_1, Z_2, Z_3$, and they obey the non-trivial Lie bracket relations:
  	$$[V_1, V_2]=Z_1, \qquad [V_2,V_3]=Z_2, \qquad [V_3,V_4]=Z_3.$$
  	Take the metric on $\nn$ that makes the previous basis an orthonormal basis. 
  	
  	Any closed 2-form of type II on $\nn$ is associated to a skew-symmetric matrix  $F: \vv \to \zz$ of the form 
  	$$\left( 
  	\begin{matrix}
  		a & b & c & 0\\
  		-c & d & e & f\\
  		0 & -f & g & h
  		\end{matrix}
  	\right)
  	$$
  	with $a,b,c,d,e,f,g,h\in \RR$.  This follows from the closeness condition. On the other hand, any map $j_{z_1 Z_1+z_2Z_2+z_3 Z_3}$ is non-singular if $z_1\neq 0$ and $z_3\neq 0$, which says that $\nn$ is almost non-singular. In fact in the basis above, the matrix of $j_{z_1 Z_1+z_2Z_2+z_3 Z_3}$ is
  		$$\left( 
  	\begin{matrix}
  		0 & -z_1 & 0 & 0\\
  		z_1 & 0 & -z_2 & 0\\
  		0 & -z_2 & 0 & -z_3\\
  		0 & 0 & z_3 & 0 
  	\end{matrix}
  	\right).
  	$$ And  in this case, also $\dim \nn> 3 \dim \zz$. 
  	
  	Note that this Lie algebra is associated to a graph (see \cite{DM}).

  \end{example}

 Recall that for a 2-form $\omega$ its {\em kernel} is given by the following set:
 $$\ker(\omega)=\{ W\in \nn \, :\, \omega(W,V)=0 \mbox{ for all } V\in \nn\}.$$
 \begin{lem}\label{ImF} Let $\omega$ be a closed 2-form on the 2-step nilpotent Lie algebra $\nn=\vv \oplus \zz$ such that $\ker(\omega)\cap\zz=\{0\}$. Then the kernel of $\omega$,  $\ker(\omega)$, is an abelian subalgebra of $\nn$.	
 	
 	As a consequence, if the skew-symmetric map $F:\nn\to\nn$ is  a Lorentz force of type {\rm II} such that the image of the restriction satisfies  $Image\,F|_{\vv}=\zz$, then the kernel of $F$ in $\vv$, $\ker F|_{\vv}$,   is an abelian subalgebra of $\nn$. 
 \end{lem}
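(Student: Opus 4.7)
The plan is to derive the result directly from the closeness identity \eqref{clos} combined with 2-step nilpotency, without appealing to any metric. For $W_1, W_2 \in \ker(\omega)$ and any $U \in \nn$, apply the cocycle identity to the triple $(U, W_1, W_2)$:
$$\omega(U, [W_1, W_2]) + \omega(W_1, [W_2, U]) + \omega(W_2, [U, W_1]) = 0.$$
The last two terms vanish because $W_1, W_2 \in \ker(\omega)$, leaving $\omega(U, [W_1, W_2]) = 0$ for every $U \in \nn$. This already forces $[W_1, W_2] \in \ker(\omega)$.

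Since $\nn$ is 2-step nilpotent, $[W_1, W_2]$ also lies in $\zz$, so $[W_1, W_2] \in \ker(\omega) \cap \zz = \{0\}$ by hypothesis. Hence $[W_1, W_2] = 0$ and $\ker(\omega)$ is an abelian subalgebra of $\nn$.

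For the consequence, let $F$ be a Lorentz force of type II with $F(\vv) = \zz$. Because $F$ exchanges the two factors of $\nn = \vv \oplus \zz$, one has $\ker F = \ker F|_{\vv} \oplus \ker F|_{\zz}$, while of course $\ker(\omega) = \ker F$. The skew-symmetry of $F$ gives the adjoint relation $\la F(Z), V\ra = -\la Z, F(V)\ra$ for $Z \in \zz$, $V \in \vv$, which identifies $\ker F|_{\zz}$ with the orthogonal complement inside $\zz$ of the image $F(\vv)$. Since by hypothesis $F(\vv) = \zz$, this complement is trivial, so $\ker F|_{\zz} = \{0\}$ and hence $\ker(\omega) \cap \zz = \{0\}$. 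The first statement then applies and $\ker F|_{\vv} = \ker(\omega)$ is an abelian subalgebra of $\nn$. No step is really an obstacle; the only place where care is needed is this adjoint identification, which is what converts the surjectivity of $F|_{\vv}$ into the triviality of $\ker(\omega) \cap \zz$ required to invoke the first part.
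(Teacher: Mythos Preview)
Your proof is correct and follows essentially the same argument as the paper: both apply the closedness identity to two kernel elements and one arbitrary element, deduce $[W_1,W_2]\in\ker(\omega)\cap\zz=\{0\}$, and conclude abelianness. You additionally spell out the verification that $\ker(\omega)\cap\zz=\{0\}$ holds for a type~II Lorentz force with $F(\vv)=\zz$ via the adjoint identification $\ker F|_{\zz}=(F(\vv))^{\perp}$, a detail the paper leaves implicit; otherwise the arguments coincide.
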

 
 \begin{proof}
 	Let $U,\,V\in\ker(\omega)$ and  let $W\in\nn$ be any element. By the closeness condition one has, 
 	\begin{align*}
 		0=\omega(U, [V,W])+ \omega(V, [W,U]) + \omega(W, [U,V])=\omega(W, [U,V]).
 	\end{align*}
 Since $W$ is arbitrary, it must be $[U,V] \in \ker(\omega)$. This implies  $[U,V]\in \ker(\omega)\cap\zz=\{0\}$, that is $[U,V]=0$ for all $U,V\in \ker(\omega)$ finishing the proof. 
 \end{proof}	
 
 \subsection{Uniform magnetic fields} 
 In the following paragraphs we study the existence of uniform magnetic fields, by making use of the formulas for the Levi-Civita connection.
 
 Let $\nabla$ denote the Levi-Civita connection on the 2-step Lie group  $(N, \la\,,\,\ra)$. Since the metric is invariant by left-translations, for $X, Y$ left-invariant vector fields one has the following formula for the covariant derivative:
 $$\nabla_X Y = \frac12 \{[X,Y]- \ad(X)^*(Y)-\ad(Y)^*(X)\}$$
 where $\ad(X)^*, \ad(Y)^* $ denote the adjoints of $\ad(X), \ad(Y)$ respectively.  Thus, one obtains
 $$\left\{ 
 \begin{array}{lll}
 	(i) & \nabla_Z \widetilde{Z}=0 & \mbox{ for all } Z, \widetilde{Z}\in \zz,\\
 	(ii) & \nabla_Z X= \nabla_X Z = -\frac12 j_Z X & \mbox{ for all } Z\in \zz, X\in \vv,\\
 	(iii) & \nabla_X \widetilde{X}=\frac12 [X,\widetilde{X}] & \mbox{ for all } X, \widetilde{X}\in \vv.
 \end{array}
 \right.
 $$

 \begin{defn}
 	Let $F$ denote a skew-symmetric map on a 2-step nilpotent Lie algebra $(\nn, \la\,,\ra)$. The magnetic field is called {\em uniform} whenever it is parallel, equivalently, the corresponding skew-symmetric map  is parallel, i.e.  $\nabla F\equiv 0$.
 \end{defn}

\smallskip

$\bullet$ {\bf Parallel Lorentz forces of type I.} Take $F_1$ be a Lorentz force of type I. Recall that the closeness condition says that $F_1(Z)\in\ker(j)$ for all $Z\in \zz$.

\begin{enumerate}[(i)]
	\item  For $Z, \widetilde{Z}\in \zz$ one has $\nabla_Z F_1(\widetilde{Z})=0=F_1 \nabla_Z \widetilde{Z}$. 
	\item For $Z\in \zz$, $V\in \vv$:
\begin{itemize}
	\item 	$\nabla_Z F_1(V)=-\frac12 j_Z(F_1(V))$ and $F_1 \nabla_Z V=-\frac12 F_1(j_ZV)$, which implies
	$$F_1 \circ j_Z =j_Z \circ F_1, \quad \mbox{ for } Z\in \zz.$$
	\item $\nabla_V F_1(Z)=-\frac12 j_{F_1(Z)}(V)$ and $F_1 \nabla_V Z=-\frac12 F_1(j_Z V)$, which implies
	$$F_1 \circ j_Z =j_{F_1(Z)}, \quad \mbox{ for } Z\in \zz.$$
\end{itemize}
\item For $V, \widetilde{V}\in \vv$: the condition
$\nabla_V F_1(\widetilde{V})= F_1 \nabla _V \widetilde{V}$ implies $$F_1\circ \ad(V)=\ad(V) \circ F_1, \quad \mbox{ for all } V\in \vv.$$
\end{enumerate}

\begin{cor}
	Let $(N,\la\,,\,\ra)$ denote a 2-step nilpotent Lie group equipped with a left-invariant metric. If $N$ is either  non-singular or it has no Euclidean factor and it is  almost non-singular,  then any uniform left-invariant magnetic field of type I on $N$ is trivial.
\end{cor}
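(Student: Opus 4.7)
The plan is to exploit the three relations just derived from the parallelism condition $\nabla F_1 \equiv 0$ together with the closeness Condition (C1), namely $F_1(\zz)\subseteq \ker(j)$, and to conclude that $F_1$ must be trivial on both summands $\vv$ and $\zz$.

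First I would observe that in either hypothesis the kernel $\ker(j)$ is trivial. Indeed, for a non-singular Lie algebra every $j_Z$ with $Z\neq 0$ is invertible and hence nonzero, so $\ker(j)=\{0\}$. In the almost non-singular case, the assumption that $N$ has no Euclidean factor is equivalent, via the Proposition attributed to Eberlein at the beginning of Section \ref{general}, to $\exp(\ker(j))$ being trivial, i.e.\ $\ker(j)=\{0\}$ again. Combining this with Condition (C1), the closeness of the 2-form forces
\[
F_1(\zz)\subseteq \ker(j)=\{0\},
\]
so that $F_1$ already vanishes on the center.

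Next I would plug this into the second relation coming from parallelism, namely
\[
F_1\circ j_Z = j_{F_1(Z)}\qquad \mbox{for all } Z\in \zz.
\]
Since $F_1(Z)=0$ for every $Z\in\zz$, the right-hand side vanishes identically, and therefore $F_1\circ j_Z =0$ for every $Z\in\zz$. Now choose $Z_0\in\zz$ such that $j_{Z_0}:\vv\to\vv$ is invertible; such a $Z_0$ exists by the non-singularity assumption, and also in the almost non-singular setting by the very definition of this notion. Composing with $j_{Z_0}^{-1}$ on the right yields $F_1|_{\vv}=0$, and combined with the previous step we conclude $F_1\equiv 0$ on $\nn$.

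The argument is essentially linear-algebraic once the parallelism identities are in hand, so I do not anticipate a serious obstacle. The only subtlety worth flagging is the identification of ``no Euclidean factor'' with $\ker(j)=\{0\}$ in the almost non-singular case: without this hypothesis one could have a nonzero $Z\in \ker(j)$, in which case $F_1(\zz)$ need not be forced to vanish by Condition (C1) and the conclusion genuinely fails. This explains why the Euclidean factor has to be excluded explicitly in the second branch of the hypothesis but not in the non-singular one.
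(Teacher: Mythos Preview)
Your proof is correct and follows essentially the same approach as the paper: use $\ker(j)=\{0\}$ together with Condition (C1) to kill $F_1$ on $\zz$, then feed this into the parallelism identity $F_1\circ j_Z=j_{F_1(Z)}$ and cancel an invertible $j_{Z_0}$ to kill $F_1$ on $\vv$. The paper phrases the last step via the commutation relation $j_Z\circ F_1=F_1\circ j_Z$ to cancel on the left rather than the right, but this is an inessential variation.
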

Observe that  since $\ker(j)$ is trivial, one has $F_1(\zz)\equiv 0$. From the conditions above it follows $0=j_{F_1(Z)}=j_Z \circ F_1$ for all $Z\in \zz$.  By taking  $Z\in \zz$ such that $j_Z:\vv\to \vv $ is non-singular, it follows that $F_1(V)=0$, for every $V\in \vv$.

\smallskip

$\bullet$ {\bf Parallel Lorentz forces of type II.} Let $F_2$ be a Lorentz force of type II. Recall that the closeness condition says that $F_2$ must satisfy Condition (C2).

 \begin{enumerate}[(i)]
 	\item  For $Z, \widetilde{Z}\in \zz$ one has $\nabla_Z F_2(\widetilde{Z})=-\frac12 j_Z(F_2(\widetilde{Z}))$. Also $F_2 \nabla _{Z}\widetilde{Z}=0$. Therefore the parallelism implies  $ j_Z(F_2(\widetilde{Z}))=0$. 
 	\item For $Z\in \zz$, $V\in \vv$:
 	\begin{itemize}
 		\item 	$\nabla_Z F_2(V)= 0$ and $F_2 \nabla_Z V=-\frac12 F_2(j_ZV)$, which implies for $F_2$ parallel
 		$$F_2 \circ j_Z=0, \quad \mbox{ for } Z\in \zz.$$
 		\item $\nabla_V F_2(Z)=\frac12 [V, F_2(Z)]$ and $F_2 \nabla_V Z=-\frac12 F_2 \circ j_ZV$, that implies for $F_2$ parallel
 		$$F_2 \circ j_Z =  \ad(F_2(Z)), \quad \mbox{ for } Z\in \zz, $$
 		which is equivalent to $j_{\widetilde{Z}}(F_2(Z))=j_Z (F_2\widetilde{Z})$, for all $Z, \widetilde{Z}\in\zz$. 
 	\end{itemize}
 	\item For $V, \widetilde{V}\in \vv$: the condition
 	$\nabla_V F_2(\widetilde{V})= F_2 \nabla _V \widetilde{V}$ implies $$j_{F_2(V)}= F_2 \circ \ad(V), \quad \mbox{ for all } V\in \vv.$$
 \end{enumerate}
 
 \begin{cor}
 	Let $(N,\la\,,\,\ra)$ denote a 2-step nilpotent Lie group equipped with a left-invariant metric. If $N$ is either  non-singular or it is almost non-singular,  then any uniform left-invariant magnetic field of type II on $N$ is trivial.
 \end{cor}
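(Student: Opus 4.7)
The plan is to exploit the parallelism conditions for type II Lorentz forces already derived in the text, together with the defining feature of (almost) non-singularity: the existence of at least one $Z_0 \in \zz$ for which $j_{Z_0}:\vv\to\vv$ is a linear isomorphism. In either case (non-singular means all nonzero $j_Z$ are isomorphisms; almost non-singular guarantees at least one such $Z_0$), such a $Z_0$ exists.

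First I would invoke condition (ii) from the parallelism analysis: if $F_2$ is parallel, then $F_2 \circ j_Z \equiv 0$ for every $Z \in \zz$. Applying this with $Z = Z_0$ and using that $j_{Z_0}$ is surjective on $\vv$, I immediately conclude $F_2|_{\vv} \equiv 0$. Next, I would invoke condition (i), which says $j_Z(F_2(\widetilde{Z})) = 0$ for all $Z,\widetilde{Z} \in \zz$. Taking $Z = Z_0$ and using that $j_{Z_0}$ is injective on $\vv$, I get $F_2(\widetilde{Z}) = 0$ for every $\widetilde{Z} \in \zz$, so $F_2|_{\zz} \equiv 0$. Combining, $F_2 \equiv 0$, so the magnetic field is trivial.

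There is essentially no obstacle here: the work has already been done in deriving the parallelism equations for type II forces, and the conclusion is a one-line application of non-degeneracy of $j_{Z_0}$. The only subtlety worth noting is that one does not need to use conditions (iii) or (iv) at all: conditions (i) and (ii) alone force triviality once any single $j_Z$ is invertible, which is exactly the (almost) non-singularity hypothesis. In the purely non-singular case the argument is identical, just with the freedom to pick any nonzero element of the center as $Z_0$.
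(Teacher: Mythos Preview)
Your proposal is correct and follows essentially the same approach as the paper: both use the parallelism conditions $F_2\circ j_Z=0$ and $j_Z(F_2(\widetilde{Z}))=0$, then apply non-degeneracy of a single $j_{Z_0}$ to conclude $F_2|_{\vv}=0$ and $F_2|_{\zz}=0$. (Minor slip: there is no condition (iv) in the list; only (i)--(iii) appear.)
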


The proof follows from $F_2 \circ j_Z V =0$ and $ j_Z F_2(\widetilde{Z})=0$  for all $Z, \widetilde{Z}\in \zz$ and $V\in \vv$, and observing that by the hypothesis, there always exists $Z\in \zz$ such that $j(Z)$ is non-singular. This gives $F_2\equiv 0$.

 \section{Lorentz forces on Lie groups of type H} In this section we focus on Lie groups of type H, whose Lie algebra is of type H. The goal is to determine exactly which ones admit non-trivial closed 2-forms of type II, equivalently skew-symmetric maps of type II satisfying Condition (C2). 
  
  Now, let $(\nn, \la\,,\,\ra)$ denote an $H$-type Lie algebra with $\dim\vv=n$ and $\dim\zz=m$. By Lemma \ref{closedF1}, the existence of a non-trivial skew-symmetric map $F:\nn\to\nn$ of type II satisfying Condition  (C2), says $n\leq 2m$. This and item $(5)$ in Proposition \ref{pairs}  implies that 
   $$n\leq 2\,m<2\rho(n).$$ Thus, the only possible pairs $(n,m)$ that verify this inequality are $(2,1)$, $(4,2)$, $(4,3)$, $(8,4)$, $(8,5)$, $(8,6)$, $(8,7)$ and $(16,8)$. These are the ones described on section \ref{general}.

   First we give a useful lemma concerning construction (iii) of a 2-step nilpotent Lie algebra $\widetilde{\nn}=\vv \oplus \widetilde{\zz}$ from a 2-step nilpotent Lie algebra $\nn=\vv \oplus \zz$, given at the end of section \ref{general}.


 
  \begin{lem}\label{ztilde} Let $\nn=\vv \oplus \zz$ denote a non-singular 2-step nilpotent Lie algebra. Let $\widetilde{\zz}\subseteq \zz$ be any non-trivial subspace. Let $\widetilde{\nn}=\vv \oplus \widetilde{\zz}$ denote the 2-step nilpotent Lie algebra equipped with the restricted Lie bracket and metric, given by construction (iii). 
  	Then any non-trivial Lorentz force on  $\widetilde{\nn}$ gives rise to a non-trivial Lorentz force on $\nn$.
  	
  	Reciprocally, any non-trivial Lorentz force on $\nn$ such that $F(\vv)\subset\widetilde{\zz}$ restricts to a non-trivial Lorentz force on  $\widetilde{\nn}$.
  \end{lem}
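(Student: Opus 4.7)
My plan is to establish both directions by direct verification of the closeness conditions (C1) and (C2) from Proposition \ref{prop1}, exploiting the orthogonal projection $\pi:\zz\to\widetilde{\zz}$ that defines construction (iii). The crucial preliminary observation is that, because $\nn$ is non-singular, $C(\nn)=\zz$, and hence (C1) forces any Lorentz force $F$ on $\nn$ to satisfy $F_{\zz}(\zz)=0$, i.e.\ $F(\zz)\subseteq\vv$. The same applies to $\widetilde{\nn}$, which is non-singular because the maps $j_Z:\vv\to\vv$ for $Z\in\widetilde{\zz}$ coincide in both Lie algebras.

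For the forward direction, given a non-trivial Lorentz force $\widetilde{F}$ on $\widetilde{\nn}$ I would extend it to $F:\nn\to\nn$ by $F|_{\widetilde{\nn}}=\widetilde{F}$ and $F|_{\widetilde{\zz}^{\perp}\cap\zz}\equiv 0$. Skew-symmetry is automatic on $\widetilde{\nn}\times\widetilde{\nn}$ and on $(\widetilde{\zz}^{\perp}\cap\zz)\times(\widetilde{\zz}^{\perp}\cap\zz)$; on the mixed block $\vv\times(\widetilde{\zz}^{\perp}\cap\zz)$ it holds because $\widetilde{F}(\vv)\subseteq\vv\oplus\widetilde{\zz}$ is orthogonal to $\widetilde{\zz}^{\perp}\cap\zz$. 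Condition (C1) is immediate since $F(\zz)\subseteq\vv$. For (C2) with $U,V,W\in\vv$, the component $F_{\zz}(U)=\widetilde{F}_{\widetilde{\zz}}(U)$ lies in $\widetilde{\zz}$, so
\[
\langle F_{\zz}(U),[V,W]_{\nn}\rangle = \langle \widetilde{F}_{\widetilde{\zz}}(U),\pi([V,W]_{\nn})\rangle = \langle \widetilde{F}_{\widetilde{\zz}}(U),[V,W]_{\widetilde{\nn}}\rangle,
\]
and the cyclic sum over $U,V,W$ vanishes by (C2) for $\widetilde{F}$ on $\widetilde{\nn}$.

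For the converse, set $\widetilde{F}:=F|_{\widetilde{\nn}}$. The hypothesis $F(\vv)\subseteq\widetilde{\zz}$ together with $F(\zz)\subseteq\vv$ guarantees that $\widetilde{F}$ maps $\widetilde{\nn}$ into itself; skew-symmetry is inherited; (C1) is automatic; and (C2) on $\widetilde{\nn}$ follows by reading the displayed identity backwards. The delicate point is non-triviality: by skew-symmetry, for any $Z\in\widetilde{\zz}^{\perp}\cap\zz$ and $V\in\vv$ one has $\langle F(Z),V\rangle=-\langle Z,F(V)\rangle=0$ (since $F(V)\in\widetilde{\zz}$), so $F|_{\widetilde{\zz}^{\perp}\cap\zz}\equiv 0$. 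Therefore all non-zero information of $F$ is already stored in $\widetilde{F}$, and $F\neq 0$ forces $\widetilde{F}\neq 0$.

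The argument is essentially bookkeeping; the single conceptual ingredient is the translation of the cyclic identity (C2) through the projection $\pi$. I expect the only mild obstacle to be the non-triviality step in the converse, which is handled by the observation that non-singularity of $\nn$ together with the hypothesis $F(\vv)\subseteq\widetilde{\zz}$ forces $F$ to vanish on the orthogonal complement of $\widetilde{\zz}$ inside $\zz$.
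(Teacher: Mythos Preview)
Your proof is correct and follows essentially the same route as the paper: the extension $F|_{\widetilde{\nn}}=\widetilde{F}$, $F|_{\widetilde{\zz}^{\perp}\cap\zz}=0$ is exactly the paper's formula $F(V+Z)=\widetilde{F}(V+\pi(Z))$, and your verification of skew-symmetry and of (C2) via the projection $\pi$ matches the paper's computation. You also supply the converse direction (including the non-triviality argument via $F|_{\widetilde{\zz}^{\perp}\cap\zz}\equiv 0$), which the paper states but does not write out; your treatment of that step is sound.
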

  \begin{proof}
  Suppose that $\widetilde{\nn}$ admits a non-trivial skew-symmetric linear map $\widetilde{F}:\widetilde{\nn}\to \widetilde{\nn}$ giving rise to a closed 2-form. Define a linear map on $\nn$,  ${F}:\nn\to\nn$ by the formula
 $${F}(V+Z)=\widetilde{F}(V+\pi(Z))\quad \mbox{ for }V\in\vv, Z \in\zz.$$
 
 We shall prove that ${F}$ is a  skew-symmetric map  giving rise to a closed 2-form on $\nn$. Moreover, by writing $\widetilde{F}=\widetilde{F}_1+\widetilde{F}_2$ and $F=F_1+F_2$ as in Proposition \ref{prop1}, then ${F}_2 \neq 0$ if $\widetilde{F}_2\neq 0$. 
 
 Now, 
 \begin{itemize}
 	\item The map ${F}$ is skew-symmetric. In fact, take $V,V'\in\vv$, $Z,Z'\in\zz$, it holds
 	$$\begin{array}{rcl}
 		\la {F}(V+Z), V'+Z'\ra &  = & \la \widetilde{F}(V+\pi(Z)), V'+\pi(Z')\ra \\
 		&  = & -\la V+\pi(Z), \widetilde{F}(V'+\pi(Z'))\ra \\
 		&  = & -\la V+Z,F(V'+Z')\ra, 
 	\end{array}
 	$$
 	since $Image(\widetilde{F})\subset\vv\oplus\widetilde{\zz}$.
 	
 	\item The map ${F}$ trivially satisfies condition (C1) since $\nn$ is non-singular. It also satisfies Condition (C2). In  fact, let $U,\,V,\,W\in\vv$ then
 	\begin{align*}
 		\la {F}(U), [V,W]\ra + \la {F}(V), [W,U]\ra + \la {F}(W), [U,V]\ra \\
 		=\la \widetilde{F}(U), \pi([V,W])\ra + \la \widetilde{F}(V), \pi([W,U])\ra + \la \widetilde{F}(W), \pi([U,V])\ra\\
 		=\la \widetilde{F}(U), [V,W]_{\widetilde{\nn}}\ra + \la \widetilde{F}(V), [W,U]_{\widetilde{\nn}}\ra + \la \widetilde{F}(W), [U,V]_{\widetilde{\nn}}\ra =0
 	\end{align*}
 	where in the second equality we use again that $Image\,F\subset\vv\oplus\widetilde{\zz}$ and in the last equality the fact that $\widetilde{F}$ is closed in $\widetilde{\nn}$.
 	
 	\item Since ${F}(V)=\widetilde{F}(V)$ for every $V\in\vv$, one immediately gets that ${F}_2\neq 0$ if $\widetilde{F}_2\neq 0$. 
 \end{itemize}
 \end{proof}

 We conclude that if a Lie algebra of type H with pair  $(n,m)$  does not admit a  non-trivial Lorentz force $F$ of type {\rm II} (and satisfying Condition (C2)) then neither do the Lie algebras of type H of pair $(n,m')$ for $m'<m$, in the cases where the pair determines the Lie algebra.

 \smallskip
 
{\bf Lorentz forces for the pair (8,7).} Now we shall prove the non-existence of skew-symmetric maps $F$ satisfying condition (C2) on the   octonionic Heisenberg algebra $\hh(\OO)$. 
 
 Let $\hh(\OO)=\vv \oplus \zz$ be the corresponding orthogonal decomposition with associated pair $(8,7)$. Assume that the linear  map  $F:\vv\oplus\zz\to\vv\oplus\zz$ is a skew-symmetric map of type {\rm II} satisfying condition (C2). Since $F(\vv)\subset\zz$, and $\dim \zz < \dim \vv$, the kernel of $F:\vv \to \zz$ must be non-trivial, i.e. there exists $U\in\vv$ with $|U|=1$ such that $F(U)=0$. 
 
 Since the group of orthogonal automorphisms of $\vv\oplus\zz$ acts transitively on the unit sphere of $\vv$, see \cite{R}, one may assume that  $U=1\in\OO$. More precisely, consider the map $\phi^{-1} F \phi$ where $\phi$ is an orthogonal automorphism such that $\phi(1)=U$.
 
 Define a linear  map on the center $G:\zz\to\zz$ by $G(Z)=F(j(Z)U)$. Since the map $F$ satisfies condition (C2), for $U=1\in \OO$ and $Z,Z'\in \zz$ it holds
 $$\begin{array}{rcl}
 	0 & = & \la F(U), [j_ZU,j_{Z'}U]\ra + \la F(j_ZU), [j_{Z'}U,U]\ra + \la F(j_{Z'}U), [U,j_ZU]\ra \\
 	& 	= &  -\la G(Z), Z'\ra + \la G(Z'), Z\ra, 
 \end{array}
 $$
 which says that $G$ is a symmetric map.
 
 Let $Z_1$, $Z_2$, $Z_3$ be  orthonormal vectors on $\zz$. Again, since the skew-symmetric map $F$ satisfies Condition (C2), one has
 \begin{equation}\label{ec12}
 	\begin{array}{rcl}
 		0 & = & 	\la G(Z_1), [j_{Z_2}U,j_{Z_3}U]\ra + \la G(Z_2), [j_{Z_3}U,j_{Z_1}U]\ra + \la G(Z_3), [j_{Z_1}U,j_{Z_2}U]\ra \\
 		&	=  & \la G(Z_1), [Z_2,Z_3]\ra + \la G(Z_2), [Z_3,Z_1]\ra + \la G(Z_3), [Z_1,Z_2]\ra \\
 		&	=  & \la G(Z_1), Z_2.Z_3\ra + \la G(Z_2), Z_3.Z_1\ra + \la G(Z_3), Z_1.Z_2\ra.
 	\end{array}
 \end{equation}
 Take the orthonormal vectors in the center  $Z_2.Z_3$, $Z_3.Z_1$, $Z_3$. Thus,  one has
 \begin{equation}\label{ec13}
 	\begin{array}{rcl}
 		0 & = &	\la G(Z_2.Z_3), (Z_3.Z_1).Z_3\ra + \la G(Z_3.Z_1), Z_3.(Z_2.Z_3)\ra + \la G(Z_3), (Z_2.Z_3).(Z_3.Z_1)\ra\\
 		&	= & \la G(Z_2.Z_3), Z_1\ra + \la G(Z_3.Z_1), Z_2\ra + \la G(Z_3), (Z_2.Z_3).(Z_3.Z_1)\ra\\
 		&	= & \la G(Z_2.Z_3), Z_1\ra + \la G(Z_3.Z_1), Z_2\ra - \la G(Z_3), Z_3.(Z_1.Z_2).Z_3\ra,
 \end{array}\end{equation}
 where we are using that $Z_1$, $Z_2$ and $Z_3$ anti-commute and that the octonions is an alternative algebra \cite{KS}.
 
 By comparing Equations \eqref{ec12} and \eqref{ec13}, we conclude that $$\la G(Z_3), Z_1.Z_2\ra = -\la G(Z_3), Z_3.(Z_1.Z_2).Z_3\ra,$$ for any set of  orthonormal elements in the center,  $Z_1$, $Z_2$, $Z_3$ in $\zz$. 
 
  For any $W\perp Z_3$ with $|W|=1$, take $Z_1 \perp span\{W,\, Z_3,\, Z_3.W\}$ with norm $1$. Then $Z_1,\, Z_2=W.Z_1,\, Z_3$ are orthonormal with $Z_1.Z_2=W$ and
 $$\la G(Z_3), W\ra =-\la G(Z_3), Z_3.(W.Z_3)\ra=-\la G(Z_3), W\ra =0.$$
 Since $Z_3$ is arbitrary, there exist $a\in\RR$ such that $G(Z)=a Z$ for all $Z\in\zz$. Taking $Z_1=i$, $Z_2=j$ and $Z_3=k$, the imaginary unit quaternions inside $\OO$, in eq. (\ref{ec12}) we have
 $$0=\la G(i), i\ra + \la G(j), j\ra + \la G(k), k\ra =3a.$$
 Therefore the map $G$ is trivial and so, this implies that the linear map $F$ is  trivial, $F\equiv 0$.
 
 \smallskip

{\bf Lorentz forces for the pair (16,8).} Now, let $\nn=\vv \oplus \zz$ denote  the $H$-type Lie algebra associated to the pair $(16,8)$. Thus, according to the section \ref{extypeH}, we work with the octonions $\mathbb{A}=\OO$, with $\vv=\OO\times \OO$ and $\zz=\OO$.

Let $F$ denote a skew-symmetric map  of type II   satisfying Condition (C2) on the Lie algebra $\nn$. Then $Image\,F|_{\vv} = \zz$, otherwise it will induce a Lorentz force of type II on a $H$-type Lie algebra with pair $(16,m')$ for $m'< 8$, by lemma \ref{ztilde}, which is a contradiction. 

 By Lemma \ref{ImF},  the kernel of the map $F$,  $ker\,F|_{\vv}$ is an abelian subalgebra of dimension $8$ on the subspace $\vv$, and by Lemma \ref{maxabelianHO} we can assume  that $ker\,F|_{\vv}=\mathbb{A}\times\{0\}$. Define the linear map on the octonions $L:\mathbb{O} \to\mathbb{O}$ by the formula $L(Z)= F(0,Z)$. Take the elements $Z_1,\,Z_2, Z_3\in \mathbb{O}$, and compute
 \begin{align*}
 	0=\la F(Z_3,0), [(0,Z_1), (0,Z_2)]\ra + \la F(0,Z_1), [(0,Z_2), (Z_3,0)]\ra + \la F(0,Z_2), [(Z_3,0), (0,Z_1)]\ra\\
 	= -\la L(Z_1), [(Z_3,0), (0,Z_2)]\ra + \la L(Z_2), [(Z_3,0), (0,Z_1)]\ra\\
 	=-\la L(Z_1), Z_3.Z_2\ra + \la L(Z_2), Z_3.Z_1\ra\\
 	=-\la L(Z_1).\bar{Z_2}, Z_3\ra + \la L(Z_2).\bar{Z_1}, Z_3\ra\\
 	=\la -L(Z_1).\bar{Z_2} + L(Z_2).\bar{Z_1}, Z_3\ra.
 \end{align*}
 Since $Z_3$ is arbitrary, one gets that 
 \begin{equation}\label{eqH168}
 	-L(Z_1).\bar{Z_2} + L(Z_2).\bar{Z_1} = 0
 \end{equation}
 for every $Z_1,\,Z_2\in\zz$. Choose the element   $Z_1=1$, to have
 \begin{equation}\label{eqH1682}
 	L(Z) = L(1).\bar{Z} 
 \end{equation}
 for every $Z\in\zz$. By replacing in Equation \eqref{eqH168}, one obtains 
 \begin{equation*}\label{eqH1683}
 	-(L(1).\bar{Z_1}).\bar{Z_2} + (L(1).\bar{Z_2}).\bar{Z_1} = 0
 \end{equation*}
 for every $Z_1,\,Z_2\in\zz$. If we take $Z_1$ and $Z_2$ on a quaternionic subalgebra that contains $L(1)$, the associativity property holds and one gets 
 \begin{equation*}
 	L(1).(-\bar{Z_1}.\bar{Z_2} + \bar{Z_2}.\bar{Z_1}) = 0. 
 \end{equation*}
 But  this subalgebra is non-commutative. Therefore,  we get that $L(1)=0$ and, from \eqref{eqH1682}, it finally holds $L\equiv 0$. So, there is no skew-symmetric map of type II satisfying Condition (C2) in this case. 
 
 This proves the next result. 
  \begin{prop}\label{force-oct}  On the algebras of type H, $\hh(\OO)$ and $\nn=(\OO \times \OO)\oplus \OO$ as above, any Lorentz force is of type {\rm I}. 
  \end{prop}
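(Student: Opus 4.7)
The overall plan is to show, separately for each of the two H-type algebras in the statement, that any skew-symmetric map $F\colon \nn \to \nn$ of type II satisfying Condition (C2) must vanish identically; together with Proposition \ref{prop1} this forces every Lorentz force on these algebras to coincide with its type I component. I would use heavily the division-algebra models of Section \ref{extypeH}, where the bracket relations and the maps $j_Z$ are made fully explicit via multiplication in $\OO$.

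For the pair $(8,7)$ with $\nn=\hh(\OO)$ built by Construction (i), the dimension count $\dim \vv = 8 > 7 = \dim \zz$ forces $F|_\vv$ to have a non-trivial kernel. Since the orthogonal automorphism group of $\hh(\OO)$ acts transitively on the unit sphere $S^{7}\subset \vv=\OO$, I would conjugate $F$ by a suitable automorphism and assume $F(1)=0$. I would then introduce the auxiliary map $G\colon \zz\to \zz$, $G(Z)=F(j_Z(1))$, and test Condition (C2) on the triples $(1,j_{Z}(1),j_{Z'}(1))$ and $(j_{Z_1}(1),j_{Z_2}(1),j_{Z_3}(1))$. The first family gives symmetry of $G$; comparing the second family on an orthonormal triple $Z_1,Z_2,Z_3$ against the same identity applied to $Z_2 Z_3, Z_3 Z_1, Z_3$ and invoking the alternative-algebra identity $Z_3\cdot(W\cdot Z_3)=-W$ for $W\perp Z_3$ should yield $\la G(Z),W\ra=-\la G(Z),W\ra$, so $G=a\cdot \id$ for some $a\in\RR$. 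Evaluating (C2) on the quaternion units $i,j,k \in \HH\subset \OO$ then gives $3a=0$, hence $F\equiv 0$.

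For the pair $(16,8)$ with $\nn=(\OO\times \OO)\oplus \OO$ built by Construction (ii), I would first show that $\operatorname{Image}(F|_\vv)=\zz$: if the image were a proper subspace $\widetilde{\zz}\subsetneq \zz$, Lemma \ref{ztilde} would restrict $F$ to a non-trivial type II Lorentz force on the H-type subalgebra $\vv\oplus \widetilde{\zz}$, which has pair $(16,m')$ with $m'<8$ and hence further restricts (by iterating Lemma \ref{ztilde} down to the $(8,m)$ column handled above) to a contradiction. With $F|_\vv$ surjective, Lemma \ref{ImF} gives that $\ker F|_\vv$ is an abelian subalgebra of dimension $8$, and Lemma \ref{maxabelianHO} lets me normalize it to $\OO\times\{0\}$. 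Setting $L(Z)=F(0,Z)$, Condition (C2) applied to $((Z_3,0),(0,Z_1),(0,Z_2))$ together with the adjoint identity $\la U\cdot V,W\ra=\la V,\overline{U}\cdot W\ra$ should produce $L(Z_2)\cdot \overline{Z_1}=L(Z_1)\cdot \overline{Z_2}$ for all $Z_1,Z_2\in\OO$; specializing $Z_1=1$ yields $L(Z)=L(1)\cdot \overline{Z}$, and substituting back gives $(L(1)\cdot \overline{Z_1})\cdot \overline{Z_2}=(L(1)\cdot \overline{Z_2})\cdot \overline{Z_1}$. Restricting $Z_1,Z_2$ to a quaternion subalgebra containing $L(1)$ restores associativity and contradicts the non-commutativity of $\HH$ unless $L(1)=0$, whence $L\equiv 0$ and $F$ is of type I.

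The main obstacle is threading the non-associativity of the octonions through the calculation: both cases hinge on the alternative identity and on the fact that any two elements of $\OO$ generate an associative subalgebra lying inside some copy of $\HH$, which is what rescues a genuinely associative sub-computation at the critical step. A secondary technical point is the two normalizations—of $U$ on $S^{7}$ via automorphism transitivity for the pair $(8,7)$, and of the maximal abelian subalgebra $\ker F|_\vv \subset \OO\times\OO$ via Lemma \ref{maxabelianHO} for the pair $(16,8)$—without which the problem devolves into an intractable coordinate computation.
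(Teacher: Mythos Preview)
Your proposal matches the paper's proof almost step for step: the same normalization of the kernel vector to $1\in\OO$ via automorphism transitivity in the $(8,7)$ case, the same auxiliary map $G$, the same comparison of Condition (C2) on the two orthonormal triples, and for $(16,8)$ the same chain Lemma~\ref{ImF} $\to$ Lemma~\ref{maxabelianHO} $\to$ define $L$ $\to$ specialize $Z_1=1$ $\to$ pass to a quaternion subalgebra containing $L(1)$.

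There is one small slip. In the $(16,8)$ case, when you argue that $\operatorname{Image}(F|_\vv)=\zz$, you say that a proper image $\widetilde{\zz}$ would yield a type~II Lorentz force on the pair $(16,m')$ with $m'<8$ and then claim this ``further restricts (by iterating Lemma~\ref{ztilde} down to the $(8,m)$ column).'' But Lemma~\ref{ztilde} only shrinks the center; it never changes $\dim\vv$, so you cannot reach any $(8,m)$ algebra that way. The contradiction is instead immediate from Lemma~\ref{closedF1}: for $(16,m')$ with $m'\le 7$ one has $\dim\nn=16+m'>3m'=3\dim\zz$, so no non-trivial type~II Lorentz force exists. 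This is exactly how the paper closes that step. With that correction your argument is the paper's argument.
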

  
  This finally enables to determine which Lie algebras of type H admit Lorentz forces of type II. 
  
 \begin{thm}\label{thm2}  Let $\nn=\vv\oplus\zz$ be a Lie algebra of type H. Then $\nn$ admits  a Lorentz force of type {\rm II} if and only if $\nn$ is the $3$-dimensional Heisenberg algebra, the $6$-dimensional complex Heisenberg algebra or the $7$-dimensional quaternionic Heisenberg algebra.
 \end{thm}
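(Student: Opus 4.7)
The plan is a short finite case analysis that assembles the results already established. By Lemma~\ref{closedF1}, the existence of a non-trivial closed 2-form of type II on an $H$-type algebra $\nn=\vv\oplus\zz$ with $(n,m):=(\dim\vv,\dim\zz)$ forces $n\le 2m$. Combined with item~(5) of Proposition~\ref{pairs}, namely $m<\rho(n)$, this yields $n\le 2m<2\rho(n)$; inspecting the Radon--Hurwitz function narrows the admissible pairs to
\[
(n,m)\in\{(2,1),\,(4,2),\,(4,3),\,(8,4),\,(8,5),\,(8,6),\,(8,7),\,(16,8)\}.
\]

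For the necessity direction I would eliminate the five pairs with $n\ge 8$ and $m\ge 4$. Proposition~\ref{force-oct} immediately rules out $(8,7)$ and $(16,8)$. For the intermediate pairs $(8,4),(8,5),(8,6)$, the classification theorem together with the uniqueness of the irreducible $\mathrm{Cl}(m)$-module for $m\not\equiv 3\pmod 4$ shows that the (unique irreducible) $H$-type algebra with each such pair is obtained from $\hh(\OO)$ via construction~(iii) applied to a subspace $\widetilde\zz\subset\Im(\OO)$ of the appropriate dimension; note that construction~(iii) preserves the $H$-type property because the maps $j_Z$ for $Z\in\widetilde\zz$ are unchanged. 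Lemma~\ref{ztilde} then forces any non-trivial Type II Lorentz force on such a quotient to lift to a non-trivial Type II Lorentz force on $\hh(\OO)$, contradicting Proposition~\ref{force-oct}.

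For sufficiency I would exhibit an explicit non-trivial closed 2-form of type II in each of the three surviving cases. Example~\ref{closedonh} already provides such forms on $\hh_3$, for instance $e^{13}$ or $e^{23}$. On $\hh(\CC)$, Example~\ref{hcomplex} shows that Condition~(C2) for a Type II map reduces to $F_2\circ J=-J\circ F_2$; prescribing $F_2(X_1)=Z_1$ and $F_2(Y_1)=0$ and extending via the anticommutation relation and skew-symmetry produces a non-trivial example (explicitly $F_2(X_2)=-Z_2$, $F_2(Y_2)=0$, $F_2(Z_1)=-X_1$, $F_2(Z_2)=X_2$). On $\hh(\HH)$, using construction~(i) with $\vv=\HH$, $\zz=\Im(\HH)$ and the basis $1,i,j,k$ of $\vv$, $Z_1=i,\,Z_2=j,\,Z_3=k$ of $\zz$, a direct check against Condition~(C2) for the skew-symmetric map determined by $F_2(i)=Z_1,\ F_2(j)=-Z_2,\ F_2(1)=F_2(k)=0$ gives $0$ trivially on the triples $(1,i,j),(1,i,k),(1,j,k)$ and reduces to $1+(-1)+0=0$ on $(i,j,k)$, confirming a non-trivial Type II closed 2-form.

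The main obstacle is the middle reduction step: one must be sure that every $H$-type algebra with pair in $\{(8,4),(8,5),(8,6)\}$ arises as a construction~(iii) quotient of $\hh(\OO)$, so that Lemma~\ref{ztilde} applies. This rests on two facts already present in the paper, the uniqueness of irreducible Clifford modules in the relevant dimensions and the fact that construction~(iii) preserves the $H$-type condition. Once this is in place the rest is a finite case check together with the short verifications of Condition~(C2) sketched above.
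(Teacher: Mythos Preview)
Your proposal is correct and follows essentially the same route as the paper: reduce to the eight candidate pairs via Lemma~\ref{closedF1} and Proposition~\ref{pairs}, eliminate $(8,7)$ and $(16,8)$ by Proposition~\ref{force-oct}, then use Lemma~\ref{ztilde} to kill $(8,4),(8,5),(8,6)$, and finally exhibit explicit Type~II forces on $\hh_3$, $\hh(\CC)$, $\hh(\HH)$. Your explicit quaternionic example $F_2(i)=Z_1$, $F_2(j)=-Z_2$, $F_2(1)=F_2(k)=0$ is a valid specialization of the $8$-dimensional solution space the paper describes, and your care in invoking the uniqueness of irreducible $\mathrm{Cl}(m)$-modules for $m\in\{4,5,6\}$ to justify that those algebras arise from $\hh(\OO)$ via construction~(iii) is a point the paper leaves somewhat implicit.
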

 
 \begin{proof}
 	 	As already said, the only algebras of type H that could admit a Lorentz force of type II will have possible pairs $(n,m)$ which are $(2,1)$, $(4,2)$, $(4,3)$, $(8,4)$, $(8,5)$, $(8,6)$, $(8,7)$ and $(16,8)$. Proposition \ref{force-oct} discards the cases for the pairs $(16,8)$ and $(8,7)$, and from the last proposition and by Lemma \eqref{ztilde} also discard the cases $(8,4)$, $(8,5)$, $(8,6)$. 
 	
 	Examples \eqref{closedonh} and \eqref{hcomplex} show the existence of Lorentz forces of type II on the Heisenberg Lie algebra of dimension three $\hh_3$ and the complex Heisenberg Lie algebra $\hh(\CC)$, respectively. 
 	
 	On the $7$-dimensional quaternionic Heisenberg algebra, in terms of the construction with the division algebra of quaternions, we may assume that the set  $\{1,i,j,k\}$ is a basis of $\vv=\mathbb{H}$. Thus,  Condition (C2) for a skew-symmetric map on this Lie algebra,  reduces to the next four linear equations:
 	\begin{align*}
 		\la F(1), k\ra - \la F(i), j\ra + \la F(j), i\ra = 0,\\
 		-\la F(1), j\ra - \la F(i), k\ra + \la F(k), i\ra = 0,\\
 		\la F(1), i\ra - \la F(j), k\ra + \la F(k), j\ra = 0,\\
 		\la F(i), i\ra + \la F(j), j\ra + \la F(k), k\ra = 0, 
 	\end{align*}
 	whose solution space is an $8$-dimensional subspace. This gives the set of all possible Lorentz forces of type II, $\{F:\vv\to\zz\}$ and each solution extends in the obvious way to a  skew-symmetric map of type II.
 	
 \end{proof}

\section{Other examples with trivial magnetic fields of type II}

In this section we show other non-singular Lie algebras admitting no closed 2-forms of type II. We obtain then by modifying $H$-type Lie algebra as follows. 

Let $\hh=\vv\oplus\zz$ be an $H$-type Lie algebra and choose $Z_1\in\zz$ with $||Z_1||=1$. Let $\vv=\vv_1\oplus\vv_2$ be an orthogonal decomposition with the condition that $\vv_1$ and $\vv_2$ are invariant under $j_{Z_1}$ and $j_{Z_2}$ for some $Z_2\perp Z_1$: this is possible for $dim\,\hh >7$. Observe that both $\vv_1$ and $\vv_2$ are modules of the Clifford algebra $\mathrm{Cl}(2)$, hence their dimensions must be of the form  $4s$ for some non-negative integer $s$. Take a scalar  $r\in\RR$. 

Define a new Lie algebra structure on the metric vector space $\nn_r:=\vv\oplus\zz$ by mean of the next skew-symmetric maps
$$\tilde{j}_{Z+\lambda Z_1}(V_1+V_2) = j_{Z+\lambda Z_1}(V_1+V_2) + (r-1) j_{\lambda Z_1} V_2$$    
where $Z\in\langle Z_1\rangle ^{\perp}$, $\lambda\in\RR$, $V_i\in\vv_i$ for $i=1,2$. In simple terms, we modify $j_{Z_1}$ by a factor $r$ on the subspace $\vv_2$. 

Notice that for a fix $r\neq 0$ there are several different new Lie algebras. In fact, the resulting Lie algebra depends on $Z_1$ and on the decomposition $\vv=\vv_1\oplus\vv_2$. However for $r=1$ one gets an $H$-type Lie algebra. 

\begin{thm}\label{other-nonsing} 
	For each $r>0$, every 2-step Lie algebra $\nn_r$ is non-singular and not isomorphic to an $H$-type Lie algebra if $r\neq 1$. 
\end{thm}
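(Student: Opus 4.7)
I would show $\tilde{j}_{Z}$ is injective for every $0 \neq Z = W + \lambda Z_1 \in \zz$ with $W \perp Z_1$. The case $\lambda = 0$ is immediate, since $\tilde{j}_{W} = j_{W}$ is invertible by the $H$-type hypothesis on $\hh$. For $\lambda \neq 0$, I would assume $\tilde{j}_{Z} V = 0$ with $V = V_1 + V_2$ and take the inner product with $j_{Z_1} V$. Two $H$-type identities collapse the expression: the anticommutation $j_{W} j_{Z_1} + j_{Z_1} j_{W} = 0$ (valid because $W \perp Z_1$) together with skew-symmetry forces $\langle j_{W} V, j_{Z_1} V\rangle = 0$, while $j_{Z_1}^{*} j_{Z_1} = \id$ reduces the remaining term to $\lambda(\|V_1\|^2 + r\|V_2\|^2)$. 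Since $\lambda \neq 0$ and $r > 0$, one concludes $V = 0$.

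\textbf{Non-isomorphism to $H$-type for $r \neq 1$.} The plan is to use a metric-free Lie algebra invariant: the Pfaffian polynomial $P: \zz^{*} \to \RR$ defined, up to a non-zero scalar from a choice of basis of $\vv$, by $P(\ell) = \mathrm{Pf}(\omega_{\ell})$ with $\omega_{\ell}(V,V') = \ell([V,V'])$. Modulo a non-zero scalar and pullback by $\GL(\zz)$, $P$ is an invariant of the Lie algebra. For any $H$-type algebra with $\dim\vv = n$, the identity $j_Z^2 = -\|Z\|^2\,\id$ yields $P(Z) = \pm(\|Z\|^2)^{n/4}$, a positive integer power of a positive-definite quadratic form; the same property must hold for the restriction of $P$ to any subspace of $\zz$.

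I would compute $P|_{\zz'}$ for $\nn_r$ on $\zz' := \mathrm{span}\{Z_1, Z_2\}$. By construction both $\vv_1$ and $\vv_2$ are invariant under $j_{Z_1}$ and $j_{Z_2}$, hence under $\tilde{j}_{Z}$ for every $Z \in \zz'$, giving a block-diagonal decomposition. On each $\vv_i$ the restrictions of $j_{Z_1}$ and $j_{Z_2}$ anticommute and square to $-\id$, so $\vv_i$ carries the structure of a right $\HH$-module of real dimension $4 s_i$; then $\tilde{j}_{\mu Z_2 + \lambda Z_1}|_{\vv_i}$ is multiplication by the quaternion $\mu j + \lambda_i i$ with $\lambda_1 = \lambda$, $\lambda_2 = r\lambda$, of real determinant $(\mu^2 + \lambda_i^2)^{2 s_i}$. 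Taking Pfaffians,
\[
P|_{\zz'}(\mu Z_2 + \lambda Z_1) \;=\; \pm (\mu^2 + \lambda^2)^{s_1}(\mu^2 + r^2 \lambda^2)^{s_2}.
\]
For $r > 0$ with $r \neq 1$, the two factors are non-proportional real-irreducible quadratics; over $\CC$ the polynomial splits into four distinct linear forms with multiplicities $s_1, s_1, s_2, s_2$, whereas a power of a single real-irreducible quadratic has only one complex-conjugate pair of roots. This multiset of root-multiplicities in $\CC\mathbb{P}^1$ is preserved by $\GL_2(\RR)$ change of variables on $(\mu, \lambda)$ and by non-zero scaling, so $P|_{\zz'}$ cannot be transformed into a power of a positive-definite quadratic, contradicting the form required by an $H$-type isomorphism. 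The main obstacle I anticipate is producing the metric-free invariant itself; the Pfaffian restricted to the plane along $Z_1, Z_2$ is the natural choice because on this plane the parameter $r$ visibly decouples the Clifford-module structures on $\vv_1$ and $\vv_2$, and factorization patterns of binary forms are robust enough to detect that decoupling.
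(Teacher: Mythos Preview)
Your argument is correct on both counts, and in each part you take a slightly different route from the paper.

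For non-singularity, the paper composes with $j_{Z+\lambda Z_1}$ and then pairs with $V_2$, reaching $(-\|Z\|^2 - r\lambda^2)\|V_2\|^2 = 0$; this forces $V_2=0$ first and then $V_1=0$ by substitution. Your single pairing with $j_{Z_1}V$ handles $V_1$ and $V_2$ simultaneously and is one step shorter.

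For the failure of $H$-type, the paper appeals to the $\tilde{H}$-type condition of \cite{LT}, namely that $Z\mapsto \det(\tilde{j}_Z^2)^{1/n}$ be a quadratic form, computes this on $\mathrm{span}\{Z_1,Z_2\}$ as $-(\mu^2+\lambda^2)^{n_1/n}(\mu^2+r^2\lambda^2)^{n_2/n}$, and rules out quadraticity by checking that the mixed partial $\partial^2 q/\partial\mu\,\partial\lambda$ is non-constant. Your Pfaffian is the same invariant up to a power (indeed $\det(\tilde{j}_Z^2)^{1/n}=\pm P^{4/n}$), but you package it as a metric-free Lie-algebra invariant from the outset and distinguish the $r\neq 1$ case by the projective root pattern over $\CC$ rather than by a derivative test. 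What this buys you is that the passage from ``not $H$-type for this metric'' to ``not isomorphic to any $H$-type algebra'' is explicit in your write-up, whereas the paper leaves that step to the cited reference. The paper's derivative check, on the other hand, is entirely elementary and avoids invoking unique factorization of binary forms.

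One cosmetic point: since the isomorphism with an $H$-type algebra would make $P_{\nn_r}$ itself a power of a positive-definite quadratic (the $\GL(\zz)$-pullback is absorbed into that quadratic), the restriction $P|_{\zz'}$ is already such a power in the given coordinates, so the $\GL_2(\RR)$-equivariance you invoke at the end is not actually needed---but it does no harm.
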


\begin{proof}
Let $0\neq Z+\lambda Z_1\in\zz$ with $Z\perp Z_1$ and $V_1 \in \vv_1$, $V_2\in\vv_2$ such that
\begin{align*}
\tilde{j}_{(Z+\lambda Z_1)}(V_1+V_2)=0 \implies j_{(Z+\lambda Z_1)}\tilde{j}_{(Z+\lambda Z_1)}(V_1+V_2)=0 \\
\implies -||Z+\lambda Z_1||^2 (V_1+V_2)+(r-1)j_{(Z+\lambda Z_1)}j_{\lambda Z_1} V_2=0\\
\implies -||Z+\lambda Z_1||^2 (V_1+V_2)+(r-1)j(Z)j_{\lambda Z_1}V_2  - (r-1)\lambda^2 V_2 =0.  
\end{align*}
Taking inner product with $V_2$:
\begin{align*}
-||Z+\lambda Z_1||^2 ||V_2||^2 -(r-1)\lambda^2 ||V_2||^2 = 0
\implies (-||Z||^2-r\lambda^2)||V_2||^2=0.
\end{align*}
Since $r>0$ this implies that $Z+\lambda Z_1=0$ or $V_2=0$, but if $V_2=0$ then replacing in the previous equation we obtain that  $V_1=0$.	
	
To prove that these Lie algebras are not of  type $H$ we will show that they are not of $\tilde{H}-$type as in \cite{LT}. This means that $Z\to \det(\tilde{j}_Z^2)^{1/n}$ is not a quadratic form, where $n =dim\,\vv$. 
	
Under our hypothesis there is $Z_2\in\zz$, $Z_2\perp Z_1$, such that $J_{Z_2}$ preserves the subspaces $\vv_1$ and $\vv_2$. We supposed without losing generality that $||Z_2||=1$. For $\mu,\lambda\in\RR$,
\begin{align*}
\tilde{j}_{\mu Z_2+\lambda Z_1}^2 (V_1+V_2)  = \tilde{j}_{\mu Z_2+\lambda Z_1} ( j_{\mu Z_2+\lambda Z_1}(V_1+V_2) + (r-1) j_{\lambda Z_1} V_2)\\
= j_{\mu Z_2+\lambda Z_1}(j_{\mu Z_2+\lambda Z_1}(V_1+V_2) + (r-1) j_{\lambda Z_1} V_2)+ (r-1)j_{\lambda Z_1}(j_{\mu Z_2} V_2 + r j_{\lambda Z_1} V_2) \\
=-||\mu Z_2+\lambda Z_1||^2 (V_1+V_2) - (r-1)\lambda^2 V_2 - (r-1)r \lambda^2 V_2\\
= -(\mu^2+ \lambda^2) V_1 - (\mu^2+ r^2\lambda^2)V_2. 
\end{align*}
Then
$$q(\mu Z_2+\lambda Z_1) = \det(\tilde{j}_{\mu Z_2+\lambda Z_1} ^2)^{1/n} = - (\mu^2 + \lambda^2)^{n_1/n}(\mu^2+ r^2\lambda^2)^{n_2/n}$$
where $n_1=dim\, \vv_1$ and $n_2=dim\, \vv_2$.  This form is quadratic if and only $r=1$. One way to check this is to see that
$$\frac{\partial q}{\partial\mu\partial \lambda}(\mu Z_2+\lambda Z_1) = -\frac{4 n_1 n_2 (r^2 -1)^2 \mu^3 \lambda^3}{(\mu^2+ \lambda^2)^{2-\frac{n_1}{n}} (\mu^2+ r^2\lambda^2)^{2-\frac{n_2}{n}}}$$  
is constant with respect to $\mu$ and $\lambda$ if and only if $r=1$, since $r>0$. 
\end{proof}

\begin{rem}
	If $r\leq 0$ the Lie algebras $\nn_r$ are almost non-singular. This is trivial if $r=0$ since $j_{Z_1}V_2=0$ for $V_2\in\vv_2$ and if $r<0$, taking $0\neq V_2\in\vv_2$ such that $V_1=\frac{r}{||Z||^2}j_Z j_{Z_1} V_2 \in\vv_1$ for some $0\neq Z\in\langle Z_1\rangle ^{\perp}$ and $\lambda =\tfrac{||Z||}{\sqrt{|r|}}$  then
	$$\tilde{j}_{Z+\lambda Z_1} (V_1+V_2) =0.$$
	
	For any $H$-type Lie algebra of dimension greater than $7$ a particular decomposition of $\vv$ is obtain in the following way. Suppose $dim\,\zz = m$. Let $\tilde{m}<m$ be the greatest number such that the dimension of a irreducible $\mathrm{Cl}(\tilde{m})$-module is half of the dimension of a irreducible $\mathrm{Cl}(m)$-module. Let $B=\{Z_1,\ldots,Z_{m}\}$ be an orthonormal basis of $\zz$, then the subset $\{Z_1,\ldots,Z_{\tilde{m}}\}\subset B$ generates the Clifford algebra  $\mathrm{Cl}(\tilde{m})$ whose action on $\vv$ decomposes as $\vv=\vv_1\oplus\vv_2$ where $dim\,\vv_1=dim\,\vv_2$. Then every linear map $j_{Z_i}$ for $1\leq i\leq \tilde{m}$ preserves the decomposition  $\vv_1\oplus\vv_2$  while: $j_{Z_i}\vv_1=\vv_2$ and $j_{Z_i}\vv_2=\vv_1$ for $\tilde{m}<i\leq m$. Notice that this procedure is valid for an irreducible or non-irreducible $H$-type Lie algebra, by decomposing each irreducible component. In this way we can consider the corresponding modified Lie algebras $\nn_r$ for this decomposition.
	
	 In this case, $n_r$ is isomorphic to $n_{1/r}$ for every $r>0$. The map $\phi:\nn_r\to\nn_{1/r}$ given by $\phi|_{\vv}=j_{Z_1}j_{Z_2}\ldots j_{Z_{\tilde{m}}}j_{Z_{\tilde{m}+1}}$, $\phi(Z_1)=\tfrac{(-1)^{\tilde{m}}}{r}Z_1$, $\phi(Z_i)=(-1)^{\tilde{m}}Z_i$ for $i=2,\ldots, \tilde{m}+1$ and $\phi(Z_i)=(-1)^{\tilde{m}+1}Z_i$ for $i=\tilde{m}+2,\ldots,m$; is an isomorphism.  This can be verified seeing that $$\phi^{t} j^r_{Z_i} \phi = j^{1/r}_{\phi^{t}(Z_i)}, \text{ for } i=1,\ldots,m,$$
	  where $j^r$ and $j^{1/r}$ are the $j$-operators of $n_r$ and $n_{1/r}$, respectively. For $i=1$ we use the fact that $\phi$ interchanges the spaces $\vv_1$ and $\vv_2$.  
	
\end{rem}

\smallskip

{\bf Case  (8,7)} Now we study the existence of magnetic fields of type II on the Lie algebras $\nn_r$ derived from  the octonionic $H$-type Lie algebra (8,7). 

Recall that the maps $j_z$ have matricial presentations as follows in the basis $V_1,V_2,\hdots, V_8$ and $Z=z_1 Z_1 + \hdots +z_7 Z_7$:
\begin{equation}\label{j87}
	j_Z= \left(\begin{matrix}
		0& -z_1 & -z_2  &  -z_3 & -z_4 & -z_5 & -z_6 & -z_7\\
		z_1& 0 &  -z_3 & z_2  & z_5 & -z_4 & z_7 & -z_6 \\
		z_2& z_3 &  0 & -z_1  & z_6 & -z_7 & -z_4 & z_5\\
		z_3& -z_2 & z_1 & 0  & z_7 & z_6 & -z_5 & -z_4\\
		z_4& -z_5 & -z_6  & -z_7  & 0 & z_1 & z_2 & z_3\\
		z_5&  z_4&  z_7 &  -z_6 & -z_1 & 0 & z_3 & -z_2\\
		z_6& -z_7 &  z_4 & z_5  & -z_2 & -z_3 & 0 & z_1\\
		z_7&  z_6& -z_5  & z_4  & -z_3 & z_2 & -z_1 & 0\\
	\end{matrix}
	\right).
\end{equation}

Using the previous remark, in this case $\tilde{m} = 3$ and we obtain the following subspaces $\vv_1=span\{V_1,V_2,V_3,V_4\}$ and $\vv_2=span\{V_5,V_6,V_7,V_8\}$, so that $\vv=\vv_1 \oplus \vv_2$ is invariant under $j_{Z_1}$ and $j_{Z_2}$. Define the Lie algebra $\nn_r$ as above.

To determine the existence of magnetic fields of type II on $\nn_r$ we write down the linear system of 56 equations and 56 variables derived from Condition (C2):
$$\la [V_i,V_j], F(V_k)\ra + \la [V_j,V_k], F(V_i)\ra + \la [V_k,V_i], F(V_j)\ra=0.$$

To solve this, one takes $F(V_s)=\sum_{t=1}^7 a_{ts} Z_s$. The solution of the corresponding linear system can be achieved  with help of any software.
 One gets the trivial solution,  $F=0$. 

\begin{rem}
	For the Lie algebra above $\nn_r$, the corresponding Pfaffian is given by:
	$$\mathrm{Pf}(J_Z)=\|Z\|^4+z_1^2((r^2-1)(z_1^2+z_2^2+z_3^2) + 2(r-1)(z_4^2+z_5^2+z_6^2+z_7^2)),$$
	where $Z=z_1 Z_1 + \hdots +z_7 Z_7\in\zz$.
	This is used to distinguish isomorphism classes of Lie algebras, see for instance \cite{LO}. 
\end{rem}

\smallskip

{\bf Case (16,8)} In this situation, the linear operator $j$ can be written in terms of the maps given in Equation (\ref{j87}). For $Z=z_1 Z_1 + \hdots +z_8 Z_8\in\zz$, there is a basis $V_1,\ldots, V_{16}$ of $\vv$ such that  
$$j_Z=\left(\begin{matrix}
	j'_{z_1 Z_1 + \hdots +z_7 Z_7}  &   -z_8 Id \\
	z_8 Id         &   -j'_{z_1 Z_1 + \hdots +z_7 Z_7}  
\end{matrix}\right)$$
consists of $8\times 8$ blocks, where $j'$ is given in (\ref{j87}) and $Id$ is the $8\times 8$ identity matrix. So the spaces $\vv_1=span\{V_1,\ldots,V_8\}$ and $\vv_2=span\{V_9,\ldots,V_{16}\}$ are invariant under $j_{Z_1}$ and $j_{Z_2}$ (they are $\mathrm{Cl}(7)$-submodules). 

With this setting we construct the associated Lie algebras $\nn_r$ and we write down the Coniditon C2 for the existence of closed 2-forms of type II. We  also obtain the non-existence of magnetic fields of type II by solving the corresponding linear system.

\begin{prop}\label{nonexistnr} Every closed magnetic field on a Lie algebra $\nn_r$  as above is of type I, for any $r$. 
\end{prop}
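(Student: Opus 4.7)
The plan is to reduce the statement, for both the $(8,7)$-type and $(16,8)$-type constructions of $\nn_r$, to showing that a certain homogeneous linear system in the coefficients of a putative Lorentz force of type II has only the zero solution, for every real parameter $r$.

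By Proposition \ref{prop2}, it is equivalent to prove that any skew-symmetric map $F_2:\nn_r\to\nn_r$ of type II satisfying Condition (C2) must be trivial. Fix the orthonormal basis $\{V_1,\dots,V_n\}$ of $\vv$ and $\{Z_1,\dots,Z_m\}$ of $\zz$ used above, with $(n,m)=(8,7)$ or $(n,m)=(16,8)$, and the decomposition $\vv=\vv_1\oplus\vv_2$. Write $F_2(V_s)=\sum_t a_{ts}Z_t$. Applied to each unordered triple $(V_i,V_j,V_k)$ of basis vectors, Condition (C2) gives the linear equation
\begin{equation*}
\sum_t a_{tk}\la Z_t,[V_i,V_j]\ra+\sum_t a_{ti}\la Z_t,[V_j,V_k]\ra+\sum_t a_{tj}\la Z_t,[V_k,V_i]\ra=0.
\end{equation*}
The coefficients are read from the matrix of $j_Z$ in \eqref{j87}, modified on $\vv_2$ by the factor $r$ in the $Z_1$-direction, so the resulting $\binom{n}{3}\times nm$ coefficient matrix $M(r)$ has entries that depend polynomially (in fact, in each row, linearly) on $r$.

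The key step is to establish that $M(r)$ has full column rank $nm$ for every $r\in\RR$. I would proceed as follows. At $r=1$, the Lie algebra $\nn_1$ is the underlying $H$-type Lie algebra of pair $(n,m)$, so by Proposition \ref{force-oct} the rank of $M(1)$ is already $nm$. Since the set of $r$ for which $M(r)$ fails to have full column rank is cut out by the simultaneous vanishing of the $nm\times nm$ minors, it is an algebraic subset of $\RR$. One then picks, guided by the $r=1$ verification, a concrete $nm\times nm$ submatrix of $M(r)$ whose determinant $\Delta(r)$ is a nonzero polynomial in $r$, and checks (with the aid of symbolic computation on the explicit matrix presentation above) that $\Delta(r)$ has no real zeros; for the $(8,7)$ case the key extra equations come from triples $(V_i,V_j,V_k)$ that straddle the decomposition $\vv_1\oplus\vv_2$, and for $(16,8)$ one adds the triples involving the $-z_8\,\mathrm{Id}$ cross blocks. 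An alternative, more conceptual finish: write $M(r)=A+rB$ with $A,B$ constant; show that $\ker A\cap\ker B=\{0\}$, from which $\ker M(r)=\{0\}$ for all $r$ follows by an elementary argument using that $M(r)\xi=0$ forces $A\xi=B\xi=0$ whenever $A\xi$ and $B\xi$ live in complementary subspaces of the equation space, a separation that can be arranged by partitioning the triples according to whether they involve $Z_1$ or not.

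The main obstacle is the sheer size of these systems ($\binom{8}{3}=56$ equations in $56$ unknowns for $(8,7)$, and $\binom{16}{3}=560$ equations in $128$ unknowns for $(16,8)$), which forces the verification to be computer-assisted unless one finds a cleaner structural argument. The structural route that worked in Proposition \ref{force-oct}, based on the alternativity and associator identities of $\OO$, is not directly available here: the $r$-modification breaks the $H$-type identity $j_Z^2=-\|Z\|^2\,\id$ precisely in the $Z_1$-direction on $\vv_2$, so the elegant reduction that produces the symmetric auxiliary map $G(Z)=F(j_Z U)$ no longer yields an intrinsic octonionic algebraic identity, and the computational check of full column rank of $M(r)$ appears to be unavoidable.
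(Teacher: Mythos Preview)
Your primary approach---writing Condition (C2) as a homogeneous linear system in the $a_{ts}$'s and verifying, with computer assistance, that it has only the trivial solution for every $r$---is exactly what the paper does; the paper simply states that solving the system with software yields $F=0$, and the paragraph following the proposition reproduces your openness observation near $r=1$.

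Your alternative route, however, has a gap. From $(A+rB)\xi=0$ for a \emph{fixed} $r$ you cannot conclude $A\xi=B\xi=0$ merely from $\ker A\cap\ker B=\{0\}$: the inclusion $\ker A\cap\ker B\subseteq\ker(A+rB)$ goes the wrong way. Nor does the proposed partition of triples separate the supports of $A\xi$ and $B\xi$: the $r$-dependent contributions (coming from $\la Z_1,[V_i,V_j]\ra$ with $V_i,V_j\in\vv_2$) appear in the \emph{same} equations as the $r$-independent terms $\la Z_t,[V_i,V_j]\ra$ for $t\neq 1$, so the rows of $A$ and $B$ overlap and no complementarity in the equation space is available. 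Without the computer check, the alternative does not close, and you are back to the first approach.
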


\begin{rem}
	Notice that the Lie algebras $\nn_r$ from the proposition are non-singular but also, almost non-singular. In any case, magnetic fields are of type I. 
\end{rem}

Notice that for $r=1$ the Lie algebra $\nn_r$ coincides with the $H$-type Lie algebra. In this case it was proved previously that the corresponding system only admits the trivial solution, that is, the  corresponding matrix of the system is non-singular. Since the set of non-singular matrices is open, there exists an open subset containing the matrix for $r=1$ such that all matrices in this open subset are non-singular, so that one obtains the trivial solution whenever $r$ approaches to $r=1$. The  statement in the proposition \ref{nonexistnr} holds for this particular example. But the reasoning in this paragraph is independent of the decomposition $\vv_1 \oplus \vv_2$ and it still  holds for the general construction of $\nn_r$ explained previously.

 
 
After the results in the paper a natural question remains still open:

\smallskip

{\em Open question: Are the $H$-type Lie algebras in Theorem \ref{thm2} the only fat Lie algebras admitting closed 2-forms of type II? }

\end{document}